\documentclass[a4paper,10pt]{amsart}
\usepackage[utf8]{inputenc}
\usepackage{amsxtra}
\usepackage{amsopn}
\usepackage{amsmath,amsthm,amssymb}
\usepackage{amscd}
\usepackage{mathtools}
\usepackage{amsfonts}
\usepackage{latexsym}
\usepackage{verbatim}
\usepackage{MnSymbol}
\usepackage{xcolor}
\usepackage{cases}
\usepackage{tikz-cd}
\usepackage{cases}

\newcommand{\lr}{\longrightarrow}

\newcommand{\la}{\llangle}
\newcommand{\ra}{\rrangle}

\let\c\overline
\theoremstyle{plain}
\newtheorem{theorem}{Theorem}[section]
\newtheorem*{theorem*}{Theorem \ref{thm-main}}
\newtheorem*{theorem**}{Theorem \ref{thm-dol-11-dim-char}}
\newtheorem*{proposition*}{Proposition \ref{prop-alm-kahl}}

\newtheorem{lemma}[theorem]{Lemma}

\newtheorem{corollary}[theorem]{Corollary}
\newtheorem{remark}[theorem]{Remark}
\newtheorem{example}[theorem]{Example}

\newtheorem*{question*}{Question}
\newtheorem*{mt*}{Main Theorem}
\sloppy

\makeatletter
\newtheorem*{rep@theorem}{\rep@title}
\newcommand{\newreptheorem}[2]{%
\newenvironment{rep#1}[1]{%
 \def\rep@title{#2 \ref{##1}}%
 \begin{rep@theorem}}%
 {\end{rep@theorem}}}
\makeatother

\newreptheorem{theorem}{Theorem}
\newreptheorem{corollary}{Corollary}

\newcommand\C{{\mathbb C}}

\newcommand\N{{\mathbb N}}
\newcommand\R{{\mathbb R}}
\newcommand\Z{{\mathbb Z}}

\newcommand{\del}{{\partial}}
\newcommand{\delbar}{{\overline{\del}}}


\newcommand{\cinf}{\mathcal{C}^\infty}
\renewcommand{\H}{\mathcal{H}}

\DeclareMathOperator{\vol}{Vol}

\DeclareMathOperator{\End}{End}
\DeclareMathOperator{\id}{id}

\let\c\overline
\let\phi\varphi

\title[Primitive decomposition of BC and Dolbeault harmonic $(k,k)$-forms]{Primitive decomposition of Bott-Chern and Dolbeault harmonic $(k,k)$-forms on compact almost K\"ahler manifolds}
\author{Tom Holt}
\address{Dipartimento di Scienze Matematiche, Fisiche e Informatiche\\
Unit\`{a} di Matematica e Informatica\\
Universit\`{a} degli Studi di Parma\\
Parco Area delle Scienze 53/A \\
43124 Parma, Italy}
\email{thomas.holt@warwick.ac.uk}
\author{Riccardo Piovani}
\address{Dipartimento di Scienze Matematiche, Fisiche e Informatiche\\
Unit\`{a} di Matematica e Informatica\\
Universit\`{a} degli Studi di Parma\\
Parco Area delle Scienze 53/A \\
43124 Parma, Italy}
\email{riccardo.piovani@unipr.it}

\keywords{Bott-Chern Laplacian; Aeppli Laplacian, Dolbeault Laplacian, primitive decomposition, almost complex manifold, harmonic form}
\thanks{\newline 
The second author is partially supported by GNSAGA of INdAM}
\subjclass[2020]{32Q60; 53C15}

\begin{document}
\maketitle

\begin{abstract} 
We consider the primitive decomposition of $\overline \partial, \partial$, Bott-Chern and Aeppli-harmonic $(k,k)$-forms on compact almost K\"ahler manifolds $(M,J,\omega)$. For any $D \in \{\overline\partial, \partial, BC, A\}$, we prove that the $L^k P^0$ component of $\psi \in \mathcal{H}_{D}^{k,k}$, is a constant multiple of $\omega^k$. 
Focusing on dimension 8, we give a full description of the spaces $\mathcal{H}_{BC}^{2,2}$ and $\mathcal{H}_{A}^{2,2}$, from which follows $\H^{2,2}_{BC}\subseteq\H^{2,2}_{\del}$ and $\H^{2,2}_{A}\subseteq\H^{2,2}_{\delbar}$.
We also provide an almost K\"ahler 8-dimensional example where the previous inclusions are strict and the primitive components of an harmonic form $\psi \in \mathcal{H}_{D}^{k,k}$ are not $D$-harmonic, showing that the primitive decomposition of $(k,k)$-forms in general does not descend to harmonic forms.
\end{abstract}

\section{Introduction}\label{introduction}

A recent answer to a question of Kodaira and Spencer, \cite[Problem 20]{Hi}, shows that the dimension of the space of Dolbeault harmonic forms depends on the choice of the metric on a given compact almost complex manifold, see \cite{HZ,HZ2}.

The primitive decomposition of harmonic forms has proven to be useful in describing the spaces of harmonic $(1,1)$-forms in dimension 4. In the case of Dolbeault harmonic forms it has been used to show that $h^{1,1}_\delbar:=\dim_\C\H^{1,1}_\delbar$ is either equal to $b^-$ or $b^-+1$, depending on the choice of metric, see \cite{Ho,HZ,TT}. Similarly, for Bott-Chern harmonic forms, it yields $h^{1,1}_{BC}:=\dim_\C\H^{1,1}_{BC}=b^-+1$ for all metrics, see \cite{Ho,PT4}. See \cite{piovani-invariant,PT5,tardini-tomassini-dim6} for other related results and \cite{piovani-parma,zhang-parma} for two surveys on the subject. 

In this paper, we explore what the primitive decomposition can tell us about harmonic $(k,k)$-forms in higher dimensions. 
We start by considering a $2n$-dimensional almost Hermitian manifold $(M,J,\omega)$. The almost complex structure $J$ induces the bidegree decomposition on the space of complex valued $k$-forms 
$$A^k_{\mathbb{C}} = \bigoplus_{p+q = k} A^{p,q}. $$
Additionally, the almost Hermitian structure induces the primitive decomposition on the space of $k$-forms given by
$$A^k=\bigoplus_{r\geq\max(k-n,0)}L^r(P^{k-2r}),$$
where $L:=\omega\wedge$, $\Lambda:=*^{-1}L*$ and $P^s := \ker \Lambda \cap A^s$ is the space of primitive $s$-forms, for $s\le n$ (see e.g., \cite[p. 26, Th\'eor\`eme 3]{weil}). These two decompositions are compatible with each other.

In fact, for K\"ahler manifolds, i.e., when $J$ is integrable and $d\omega = 0$, the primitive decomposition passes to the space of $d$-harmonic $(p,q)$-forms, denoted by $\mathcal{H}^{p,q}_{d}(M,J) := \ker \Delta_{d}\cap A^{p,q}$, namely
\begin{equation}\label{decomp of d harmonic space}
    \mathcal{H}_d^{p,q}=\bigoplus_{r\geq\max(p+q-n,0)}L^r(
    \mathcal{H}_d^{p-r,q-r}\cap P^{p-r,q-r}).
\end{equation}
where $P^{p,q}:= P^{m}_{\mathbb C}\cap A^{p,q}$.

On K\"ahler manifolds, we also know that $\mathcal{H}_{d}^{p,q} = \mathcal{H}_{D}^{p,q}$ for all $D \in \{\overline \partial, \partial, BC, A\}$ (see Section 2 for the definitions of these spaces), therefore we have  
\begin{equation}\label{decomp of D harmonic spaces}
\mathcal{H}_D^{p,q}=\bigoplus_{r\geq\max(p+q-n,0)}L^r(
\mathcal{H}_D^{p-r,q-r}\cap P^{p-r,q-r}).    
\end{equation}
We remark that \eqref{decomp of d harmonic space} and \eqref{decomp of D harmonic spaces} have a cohomological meaning in the K\"ahler setting.

In \cite[Corollary 5.4]{cirici-wilson-2}, Cirici and Wilson prove that \eqref{decomp of d harmonic space} continues to hold true for almost K\"ahler manifolds, however it does not directly follow that \eqref{decomp of D harmonic spaces} must also be true. Cattaneo, Tardini and Tomassini prove in \cite[Theorem 3.4 and Corollary 3.5]{cattaneo-tardini-tomassini} that:
\begin{theorem}\label{thm CTT}
Let $(M,J,\omega)$ be a compact $2n$-dimensional almost K\"ahler manifold, then the following decompositions hold
$$\mathcal{H}_{\overline\partial}^{1,1} = \mathbb{C}\, \omega \oplus \left(\mathcal{H}_{\overline\partial}^{1,1}\cap P^{1,1}\right), $$
$$\mathcal{H}_{\partial}^{1,1} = \mathbb{C}\, \omega \oplus \left(\mathcal{H}_{\partial}^{1,1}\cap P^{1,1}\right), $$
$$\mathcal{H}_{\overline\partial}^{n-1,n-1} = \mathbb{C}\, \omega^{n-1} \oplus L^{n-2}\left(\mathcal{H}_{\partial}^{1,1}\cap P^{1,1}\right), $$
$$\mathcal{H}_{\partial}^{n-1,n-1} = \mathbb{C}\, \omega^{n-1} \oplus L^{n-2}\left(\mathcal{H}_{\overline\partial}^{1,1}\cap P^{1,1}\right). $$
\end{theorem}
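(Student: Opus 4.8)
The plan is to establish the two bidegree $(1,1)$ decompositions first and then deduce the $(n-1,n-1)$ statements from them by Hodge duality. Throughout I would use the almost K\"ahler identities of Cirici--Wilson, in particular $[\Lambda,\delbar]=-i\del^*$, together with the structure equations forced by $d^2=0$, namely $\del\delbar+\delbar\del+\mu\mubar+\mubar\mu=0$. Since $d\omega=0$, the form $\omega$ is $D$-harmonic for every $D$, so $\C\,\omega\subseteq\H^{1,1}_{\delbar}$ and $\C\,\omega\subseteq\H^{1,1}_{\del}$; the whole problem in the first two lines reduces to showing that, for $\psi\in\H^{1,1}_{\delbar}$ with primitive decomposition $\psi=f\omega+\gamma$ (where $f=\tfrac1n\Lambda\psi$ and $\gamma\in P^{1,1}$), the trace function $f$ is constant. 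Indeed, once $f$ is constant the residual $\gamma=\psi-f\omega$ is again $\delbar$-harmonic, since $\omega$ is harmonic, and primitive, giving the asserted direct sum; the $\del$-case is identical after complex conjugation, using $\overline{\H^{1,1}_{\delbar}}=\H^{1,1}_{\del}$.

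To show $\delbar f=0$, I would first apply $[\Lambda,\delbar]=-i\del^*$ to $\psi$: from $\delbar\psi=0$ one reads off $n\,\delbar f=i\,\del^*\psi$. Applying $\delbar^*$ and using $\delbar^*\psi=0$, the structure equation gives $(\del\delbar)^*\psi=-\del^*\delbar^*\psi-\mubar^*\mu^*\psi-\mu^*\mubar^*\psi$, and here the crucial point is that on bidegree $(1,1)$ the operators $\mu^*$ and $\mubar^*$ take values in the zero spaces $A^{-1,2}$ and $A^{2,-1}$; hence $(\del\delbar)^*\psi=0$, so $\delbar^*\del^*\psi=0$ and therefore $n\,\delbar^*\delbar f=i\,\delbar^*\del^*\psi=0$. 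On a compact manifold this forces $\delbar f=0$.

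It then remains to upgrade $\delbar f=0$ to $df=0$, and this is where the closedness of $\omega^{n-1}$ enters. Because $\delbar f=0$ one has $df=\del f$ for the function $f$, so $d(f\,d\bar f\wedge\omega^{n-1})=df\wedge d\bar f\wedge\omega^{n-1}$, using $d^2\bar f=0$ and $d\omega^{n-1}=0$. Integrating over the compact $M$ and invoking Stokes kills the left-hand side, while the integrand equals a nonnegative multiple of the volume form, namely $i\,\del f\wedge\overline{\del f}\wedge\tfrac{\omega^{n-1}}{(n-1)!}=\lvert\del f\rvert^2\,*1$ by positivity for the $(1,0)$-form $\del f$. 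Hence $\del f=0$ as well, so $df=0$ and $f$ is constant, completing the first two decompositions.

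Finally, I would obtain the $(n-1,n-1)$ statements by Hodge duality. On any almost Hermitian manifold the relations $\del^*=-*\delbar*$ and $\delbar^*=-*\del*$ hold pointwise, so $*$ intertwines $\Delta_{\delbar}$ with $\Delta_{\del}$ and restricts to isomorphisms $\H^{1,1}_{\delbar}\xrightarrow{\sim}\H^{n-1,n-1}_{\del}$ and $\H^{1,1}_{\del}\xrightarrow{\sim}\H^{n-1,n-1}_{\delbar}$. Applying $*$ to the already proven $(1,1)$ decompositions, and using $*\omega\propto\omega^{n-1}$ together with Weil's formula $*\gamma\propto L^{n-2}\gamma$ for primitive $\gamma\in P^{1,1}$, reads off exactly the last two identities, with the $\del/\delbar$ roles interchanged as stated. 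The principal obstacle is the constancy of $f$ in the middle two paragraphs: in the non-integrable setting $\delbar f=0$ does \emph{not} by itself force $f$ to be constant, and $\delbar^2\neq0$ obstructs the naive integrable argument; what rescues the statement is precisely the vanishing of $\mu^*,\mubar^*$ on bidegree $(1,1)$ and the closedness of $\omega^{n-1}$, which together pin down the trace.
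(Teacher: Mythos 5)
Your proof is correct, and it reaches the crux of the statement --- constancy of the trace $f=\tfrac1n\Lambda\psi$ --- by a genuinely different route from the one behind the paper (the paper quotes this theorem from Cattaneo--Tardini--Tomassini, and its own generalization, Theorem \ref{thm dolbeault}, runs the same scheme as theirs). That scheme is pointwise and elliptic: wedge the two conditions $\delbar\psi=0$ and $\del*\psi=0$ with suitable powers of $\omega$, sum them, eliminate the primitive $(1,1)$-component using $d^c=i(\mu-\del+\delbar-\c\mu)$, and reduce to the scalar equation $\omega^{n-1}\wedge\del\delbar f=0$, which forces $f$ to be constant by the strong maximum principle (Lemma \ref{lemma elliptic}). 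Yours is instead an $L^2$/integration-by-parts argument: the Cirici--Wilson identity $[\Lambda,\delbar]=-i\del^*$ (the same one the paper invokes in Corollary \ref{cor bc 8}) gives $n\,\delbar f=i\,\del^*\psi$; the adjoint of $\del\delbar+\delbar\del+\mu\c\mu+\c\mu\mu=0$, combined with $\delbar^*\psi=0$ and the bidegree vanishing $\mu^*\psi=\c\mu^*\psi=0$ (valid precisely because $\psi$ has bidegree $(1,1)$), yields $\delbar^*\del^*\psi=0$ and hence $\|\delbar f\|_{L^2}=0$; your Stokes computation, using $d\omega^{n-1}=0$ and the positivity of $i\,\del f\wedge\overline{\del f}\wedge\omega^{n-1}$, then forces $\del f=0$. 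Both arguments are complete; yours avoids elliptic theory and the maximum principle altogether, at the price of a bidegree trick that does not extend to $(k,k)$-forms with $k\ge2$ (where $\mu^*\psi$ need not vanish), which is exactly the regime the paper's wedge-and-$d^c$ method is built to handle in Theorems \ref{thm-bc} and \ref{thm dolbeault}. Your Hodge-duality derivation of the $(n-1,n-1)$ statements, via $*\H^{1,1}_{\delbar}=\H^{n-1,n-1}_{\del}$ and Weil's formula \eqref{*-primitive}, is exactly the reduction the paper describes. One aside: your closing claim that $\delbar f=0$ alone does not force constancy in the non-integrable setting is not needed for your argument and is in fact doubtful on compact connected almost complex manifolds (pseudo-holomorphic functions there are constant); what your Stokes step really buys is that the proof stays elementary and self-contained.
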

This means that, on almost K\"ahler manifolds, $\mathcal{H}_{\overline \partial}^{p,q}$ and $\mathcal{H}_{\partial}^{p,q}$ both have primitive decompositions when $(p,q) = (1,1)$ and, applying the Hodge $*$ operator to the $(1,1)$-decompositions, when $(p,q) = (n-1,n-1)$.

In \cite[Theorems 3.2 and 3.3]{piovani-tardini}, Tardini and the second author prove the following results:
\begin{theorem}\label{thm PT}
Let $(M, J, \omega)$ be a compact $2n$-dimensional almost K\"ahler manifold, then the following decompositions hold
$$\mathcal{H}_{BC}^{1,1} = \mathbb{C}\, \omega \oplus \left(\mathcal{H}_{BC}^{1,1}\cap P^{1,1}\right), $$
$$\mathcal{H}_{A}^{1,1} = \mathbb{C}\, \omega \oplus \left(\mathcal{H}_{A}^{1,1}\cap P^{1,1}\right), $$
$$\mathcal{H}_{BC}^{n-1,n-1} = \mathbb{C}\, \omega^{n-1} \oplus L^{n-2}\left(\mathcal{H}_{A}^{1,1}\cap P^{1,1}\right), $$
$$\mathcal{H}_{A}^{n-1,n-1} = \mathbb{C}\, \omega^{n-1} \oplus L^{n-2}\left(\mathcal{H}_{BC}^{1,1}\cap P^{1,1}\right). $$
\end{theorem}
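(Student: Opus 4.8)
The plan is to prove the two decompositions in bidegree $(1,1)$ directly, and then to obtain the two decompositions in bidegree $(n-1,n-1)$ by applying the Hodge star operator. The common core of the $(1,1)$ statements is the following claim: if $\psi\in\mathcal{H}_{D}^{1,1}$ for $D\in\{BC,A\}$ and we write its primitive decomposition as $\psi=f\,\omega+\gamma$ with $\gamma\in P^{1,1}$ and $f$ the smooth function determined by $\Lambda\psi=nf$, then $f$ is constant. Granting this, the decomposition follows formally: since $d\omega=0$ one checks from the almost K\"ahler identities that $\partial^*\omega=\overline\partial^*\omega=0$, so $\omega\in\mathcal{H}_{BC}^{1,1}\cap\mathcal{H}_{A}^{1,1}$; hence $f\omega\in\mathcal{H}_{D}^{1,1}$ when $f$ is constant, and $\gamma=\psi-f\omega$ is then a primitive form lying in $\mathcal{H}_{D}^{1,1}$. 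The sum is direct because $\Lambda\omega=n\neq0$ while $\Lambda\gamma=0$. I would reduce at once to showing $\overline\partial f=0$: since $\mathcal{H}_{D}^{1,1}$ is invariant under conjugation, applying the conclusion to $\overline\psi$ and conjugating yields $\partial f=0$ as well, whence $df=0$ and $f$ is constant.

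Throughout I would use the almost K\"ahler identities of Cirici and Wilson, $[\Lambda,\partial]=i\overline\partial^*$ and $[\Lambda,\overline\partial]=-i\partial^*$, together with $[L,\partial]=[L,\overline\partial]=0$ (which hold since $d\omega=0$). For the Bott--Chern case, applying $\Lambda$ to the closedness relations $\partial\psi=0$ and $\overline\partial\psi=0$ and using $\Lambda\psi=nf$ gives $\overline\partial^*\psi=in\,\partial f$ and $\partial^*\psi=-in\,\overline\partial f$. The third harmonicity condition $(\partial\overline\partial)^*\psi=\overline\partial^*\partial^*\psi=0$ then forces $\overline\partial^*\overline\partial f=0$ as an identity of functions, so pairing against $f$ gives $\|\overline\partial f\|^2=0$ and $\overline\partial f=0$. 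This settles the Bott--Chern $(1,1)$ case.

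The Aeppli case is the main obstacle, since here the third condition $\partial\overline\partial\psi=0$ is second order and the naive $L^2$-pairings turn out to be circular; the point is to combine it with the co-closedness. From $\partial^*\psi=\overline\partial^*\psi=0$ and the K\"ahler identities I first get $\Lambda\overline\partial\psi=n\,\overline\partial f$. I would then apply $\Lambda$ to $\partial\overline\partial\psi=0$ and use $\Lambda\partial=\partial\Lambda+i\overline\partial^*$ to obtain $\overline\partial^*\overline\partial\psi=in\,\partial\overline\partial f$. Taking the $L^2$ pairing with $\psi$ makes the left-hand side equal to $\|\overline\partial\psi\|^2$, while the right-hand side is $in\langle\overline\partial f,\partial^*\psi\rangle=0$ because $\partial^*\psi=0$. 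Hence $\overline\partial\psi=0$, and then $n\,\overline\partial f=\Lambda\overline\partial\psi=0$, giving $\overline\partial f=0$ as required. I expect the only delicate points here to be checking that no non-integrability ($\mu,\overline\mu$) terms intrude in these manipulations---which is guaranteed because only the two clean identities above and the adjointness of $\partial,\partial^*$ are used---and fixing the precise form of the harmonicity conditions defining $\mathcal{H}_{BC}^{1,1}$ and $\mathcal{H}_{A}^{1,1}$.

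Finally, I would deduce the $(n-1,n-1)$ decompositions from the $(1,1)$ ones via the Hodge star, using that $*$ intertwines the Bott--Chern and Aeppli Laplacians, so that $*\,\mathcal{H}_{BC}^{1,1}=\mathcal{H}_{A}^{n-1,n-1}$ and $*\,\mathcal{H}_{A}^{1,1}=\mathcal{H}_{BC}^{n-1,n-1}$. Applying $*$ to $\mathcal{H}_{A}^{1,1}=\mathbb{C}\,\omega\oplus(\mathcal{H}_{A}^{1,1}\cap P^{1,1})$ and invoking the Weil identities $*\omega=\tfrac{1}{(n-1)!}\omega^{n-1}$ and $*\beta=-\tfrac{1}{(n-2)!}L^{n-2}\beta$ for primitive $(1,1)$-forms $\beta$ yields $\mathcal{H}_{BC}^{n-1,n-1}=\mathbb{C}\,\omega^{n-1}\oplus L^{n-2}(\mathcal{H}_{A}^{1,1}\cap P^{1,1})$, and symmetrically $*\,\mathcal{H}_{BC}^{1,1}$ gives $\mathcal{H}_{A}^{n-1,n-1}=\mathbb{C}\,\omega^{n-1}\oplus L^{n-2}(\mathcal{H}_{BC}^{1,1}\cap P^{1,1})$. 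These match the last two claimed identities and complete the proof.
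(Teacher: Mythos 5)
Your Bott--Chern $(1,1)$ argument is correct, and so is the Hodge-star dualization producing the two $(n-1,n-1)$ statements. The gap is in the Aeppli case, precisely at the sentence ``since $\mathcal{H}_{D}^{1,1}$ is invariant under conjugation''. Conjugating the Aeppli conditions for $\psi$ yields $\del*\overline\psi=\delbar*\overline\psi=0$ together with $\delbar\del\overline\psi=0$, which is \emph{not} the Aeppli condition $\del\delbar\overline\psi=0$: on a non-integrable almost complex manifold one only has $\del\delbar+\delbar\del=-(\mu\overline\mu+\overline\mu\mu)$ from $d^2=0$, and the algebraic operator $\mu\overline\mu+\overline\mu\mu$ has no reason to vanish on $A^{1,1}$. (Your conjugation step is legitimate for Bott--Chern, but for a reason you should state: there the second-order condition $\del\delbar*\psi=0$ lives on $(n-1,n-1)$-forms, where $\mu$ and $\overline\mu$ vanish for bidegree reasons, hence $\del\delbar=-\delbar\del$ on such forms.) Note also that the analogous space $\H^{1,1}_{\delbar}$ conjugates onto $\H^{1,1}_{\del}$, a genuinely different space in the almost complex world, so conjugation invariance of harmonic spaces is exactly the kind of statement that fails outside the integrable setting. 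As written, your Aeppli argument proves $\delbar\psi=0$ and $\delbar f=0$, but gives no control whatsoever on $\del f$.

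The gap is repairable, but not by conjugation. Your own computation shows that an Aeppli-harmonic $(1,1)$-form satisfies $\delbar\psi=0$, and $\del*\psi=0$ is equivalent to $\delbar^*\psi=0$, so in fact $\psi\in\H^{1,1}_{\delbar}$; you could then quote the Dolbeault decomposition (Theorem \ref{thm CTT}) to conclude that $f$ is constant. Observe, however, that the proof of that quoted theorem is itself not a conjugation argument but an ellipticity one, and this is also how the present paper proceeds: both the Bott--Chern and the Aeppli conditions imply the symmetric pair $\del\delbar\psi=0$ and $\del\delbar*\psi=0$; wedging these with suitable powers of $\omega$ and comparing (Lemma \ref{lemma-deldelbar} with $k=1$) isolates the scalar equation $\omega^{n-1}\wedge\del\delbar f=0$, and the maximum-principle Lemma \ref{lemma elliptic} then forces $f$ to be constant, killing $\del f$ and $\delbar f$ simultaneously with no conjugation invariance needed anywhere. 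That reduction to a single elliptic equation on the coefficient function is the mechanism your proposal is missing for the Aeppli half; your Cirici--Wilson identity computations are a genuinely slicker route for the Bott--Chern half, but they cannot carry the Aeppli case on their own.
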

We therefore see that, in the almost K\"ahler setting, $\mathcal{H}_{BC}^{p,q}$ and $\mathcal{H}_{A}^{p,q}$ both have primitive decompositions when $(p,q) = (1,1)$ or $(n-1,n-1)$.

These two results are sufficient to prove that either \eqref{decomp of D harmonic spaces} or its dual through the Hodge $*$ operator hold for any space of $D$-harmonic $(k,k)$-forms on any compact almost K\"ahler manifold with dimension up to 6. 
This raises the following question, which we shall answer in this paper: does \eqref{decomp of D harmonic spaces} (or its $*$ dual) hold for $(k,k)$-forms in general for compact almost K\"ahler manifolds with dimension 8 or greater?
We note that \eqref{decomp of D harmonic spaces} has been shown to fail for dimension 6 in bidegree $(2,1)$ in \cite[Proposition 5.1]{cattaneo-tardini-tomassini} for $D \in \{\overline\partial, \partial\}$ and in \cite[Proposition 5.1]{piovani-tardini} for $D \in \{BC, A\}$.

We also remark that the almost K\"ahler assumption is necessary for this kind of primitive harmonic decomposition. To see that this is the case in dimension 4 we refer the reader to \cite{TT,PT4}.

The structure of this paper is as follows. In Section 2 we give a brief overview of some of the basic results which will be used throughout the paper.   
In Section 3 we show that Theorem \ref{thm PT} may be partially extended to $(k,k)$-forms.
\begin{reptheorem}{cor bc decomp}
Let $(M,J,\omega)$ be a compact almost K\"ahler manifold of real dimension $2n$. For any $k\in \mathbb{N}$ we have
$$\mathcal{H}^{k,k}_{BC} = \mathbb{C}\, \omega^k \oplus \left( \mathcal{H}_{BC}^{k,k} \cap \ker L^{n-k} \right)$$
and
$$\mathcal{H}^{k,k}_{A} = \mathbb{C}\, \omega^k \oplus \left( \mathcal{H}_{A}^{k,k} \cap \ker L^{n-k} \right).$$
\end{reptheorem}
We also consider the 8-dimensional case in more detail, yielding the following description.
\begin{repcorollary}{cor bc 8}
Let $(M,J,\omega)$ be a compact almost K\"ahler manifold of real dimension $8$. We have
\[
\H^{2,2}_{BC}=\C\,\omega^2\oplus\left\{\omega\wedge\alpha+\beta\,|\,\alpha\in P^{1,1},\,\beta\in P^{2,2},\,\omega\wedge\del\alpha+\del\beta=\delbar\alpha=\delbar\beta=0\right\},
\]
and
\[
\H^{2,2}_{A}=\C\,\omega^2\oplus\left\{\omega\wedge\alpha+\beta\,|\,\alpha\in P^{1,1},\,\beta\in P^{2,2},\,\omega\wedge\del\alpha-\del\beta=\delbar\alpha=\delbar\beta=0\right\}.
\]
\end{repcorollary}

In Section 4 we show that Theorem \ref{thm CTT} may also be partially extended to $(k,k)$-forms.
\begin{reptheorem}{cor Dolbeault partial decomp}
Let $(M,J,\omega)$ be a compact almost K\"ahler manifold of real dimension $2n$. For any $k\in \mathbb{N}$ we have
$$\mathcal{H}^{k,k}_{\overline \partial} = \mathbb{C}\, \omega^k \oplus \left( \mathcal{H}_{\overline\partial}^{k,k} \cap \ker L^{n-k} \right)$$
and
$$\mathcal{H}^{k,k}_{\partial} = \mathbb{C}\, \omega^k \oplus \left( \mathcal{H}_{\partial}^{k,k} \cap \ker L^{n-k} \right).$$
\end{reptheorem}
Looking at the special case of this corollary in dimension 8, along with Corollary \ref{cor bc 8}, we are able to deduce the following
\begin{repcorollary}{cor incl}
Let $(M,J,\omega)$ be a compact almost K\"ahler manifold of real dimension $8$. We have
\[
\H^{2,2}_{BC}\subseteq\H^{2,2}_{\del},\ \ \ \ \ \ \ \H^{2,2}_{A}\subseteq\H^{2,2}_{\delbar}.
\]
\end{repcorollary}

Finally in Section 5, we consider a non left invariant almost K\"ahler structure on the 8-dimensional torus $\mathbb{T}^8=\Z^8\backslash \R^8$.
We use this example to show that there exists a $(2,2)$-form contained in $\mathcal{H}^{2,2}_{\partial}$ but not in $\mathcal{H}^{2,2}_{BC}$, and likewise there exists a $(2,2)$-form in $\mathcal{H}^{2,2}_{\overline\partial}$ but not in $\mathcal{H}^{2,2}_{A}$.  
We also show that there exists a form $\psi = \omega \wedge \alpha + \beta  \in \mathcal{H}^{2,2}_{BC}$ whose components with respect to the primitive decomposition, $\alpha  \in P^{1,1}, \beta \in P^{2,2}$, are not themselves Bott-Chern harmonic. From this we can conclude that the primitive decomposition does not in general apply to $D$-harmonic $(2,2)$-forms in dimension 8, for $D\in \{BC,A,\delbar,\del\}$.
\begin{repcorollary}{cor counterexample}
There exists a compact almost K\"ahler manifold $(M,J,\omega)$ of real dimension $8$ such that
\[
\H^{2,2}_{BC}\not\supseteq\H^{2,2}_{\del},\ \ \ \ \ \ \ \H^{2,2}_{A}\not\supseteq\H^{2,2}_{\delbar}
\]
and
\[
\H^{2,2}_{D}\not\subseteq \C\,\omega^2\oplus L\left(P^{1,1}\cap\H^{1,1}_{D}\right)\oplus \left(P^{2,2}\cap \H^{2,2}_{D}\right),
\]
where $D\in\{BC,A,\delbar,\del\}$.
\end{repcorollary}

We also consider another 8-dimensional compact nilmanifold, focusing on the subspace of left invariant harmonic forms in $\mathcal{H}^{2,2}_{D}$. We show that these spaces satisfy \eqref{decomp of D harmonic spaces} for all $D \in \{\overline \partial, \partial, BC,A\}$. Furthermore, we show that in this example these spaces are in fact all equal and have dimension 16.


\section{Preliminaries}\label{preliminaries}

Throughout this paper, we will only consider connected manifolds without boundary.
Let $(M,J)$ be an almost complex manifold of dimension $2n$, i.e., a $2n$-differentiable manifold endowed with an almost complex structure $J$, that is $J\in\End(TM)$ and $J^2=-\id$. The complexified tangent bundle $T_{\C}M=TM\otimes\C$ decomposes into the two eigenspaces of $J$ associated to the eigenvalues $i,-i$, which we denote respectively by $T^{1,0}M$ and $T^{0,1}M$, giving us
\begin{equation*}
T_{\C}M=T^{1,0}M\oplus T^{0,1}M.
\end{equation*}
Denoting by $\Lambda^{1,0}M$ and $\Lambda^{0,1}M$ the dual vector bundles of $T^{1,0}M$ and $T^{0,1}M$, respectively, we set
\begin{equation*}
\Lambda^{p,q}M=\bigwedge^p\Lambda^{1,0}M\wedge\bigwedge^q\Lambda^{0,1}M
\end{equation*}
to be the vector bundle of $(p,q)$-forms, and let $A^{p,q}=\Gamma(M,\Lambda^{p,q}M)$ be the space of smooth sections of $\Lambda^{p,q}M$. We denote by $A^k=\Gamma(M,\Lambda^{k}M)$ the space of $k$-forms. Note that $\Lambda^{k}M\otimes\C=\bigoplus_{p+q=k}\Lambda^{p,q}M$.

Let $f\in\cinf(M,\C)$ be a smooth function on $M$ with complex values. Its differential $df$ is contained in $A^1\otimes\C=A^{1,0}\oplus A^{0,1}$. On complex 1-forms, the exterior derivative acts as
\[
d:A^1\otimes\C\to A^2\otimes\C=A^{2,0}\oplus A^{1,1}\oplus A^{0,2}.
\]
 Therefore, it turns out that the derivative operates on $(p,q)$-forms as
\begin{equation*}
d:A^{p,q}\to A^{p+2,q-1}\oplus A^{p+1,q}\oplus A^{p,q+1}\oplus A^{p-1,q+2},
\end{equation*}
where we denote the four components of $d$ by
\begin{equation*}
d=\mu+\del+\delbar+\c\mu.
\end{equation*}
From the relation $d^2=0$, we derive
\begin{equation*}
\begin{cases}
\mu^2=0,\\
\mu\del+\del\mu=0,\\
\del^2+\mu\delbar+\delbar\mu=0,\\
\del\delbar+\delbar\del+\mu\c\mu+\c\mu\mu=0,\\
\delbar^2+\c\mu\del+\del\c\mu=0,\\
\c\mu\delbar+\delbar\c\mu=0,\\
\c\mu^2=0.
\end{cases}
\end{equation*}
We also define the operator $d^c:=J^{-1}dJ$. It is a straightforward computation to show that
\[
d^c=i(\mu-\del+\delbar-\c\mu).
\]

If the almost complex structure $J$ is induced from a complex manifold structure on $M$, then $J$ is called integrable. Recall that $J$ is integrable if and only if the exterior derivative decomposes into $d=\del+\delbar$.

A Riemannian metric $g$ on $M$ which is preserved by $J$, \textit{i.e.} $g(J\cdot, J \cdot) = g(\cdot, \cdot)$, is called almost Hermitian.
Let $g$ be an almost Hermitian metric, the $2$-form $\omega$ such that
\begin{equation*}
\omega(u,v)=g(Ju,v)\ \ \forall u,v\in\Gamma(TM)
\end{equation*}
is called the fundamental form of $g$. We will call $(M,J,\omega)$ an almost Hermitian manifold.
We denote by $h$ the Hermitian extension of $g$ on the complexified tangent bundle $T_\C M$, and by the same symbol $g$ the $\C$-bilinear symmetric extension of $g$ on $T_\C M$. Also denote by the same symbol $\omega$ the $\C$-bilinear extension of the fundamental form $\omega$ of $g$ on $T_\C M$. 
Thanks to the elementary properties of the two extensions $h$ and $g$, we may want to consider $h$ as a Hermitian operator
$T^{1,0}M\times T^{1,0}M\to\C$ and $g$ as a $\C$-bilinear operator $T^{1,0}M\times T^{0,1}M\to\C$.
Note that $h(u,v)=g(u,\overline{v})$ for all $u,v\in \Gamma(T^{1,0}M)$.

Let $(M,J,\omega)$ be an almost Hermitian manifold of real dimension $2n$. Denote the extension of $h$ to $(p,q)$-forms by the Hermitian inner product $\langle\cdot,\cdot\rangle$.
Let $*:A^{p,q}\lr A^{n-q,n-p}$ be the $\C$-linear extension of the standard Hodge $*$ operator on Riemannian manifolds with respect to the volume form $\vol=\frac{\omega^n}{n!}$, i.e., $*$ is defined by the relation
\[
\alpha\wedge{*\c\beta}=\langle\alpha,\beta\rangle\vol\ \ \ \forall\alpha,\beta\in A^{p,q}.
\]
Integrating the pointwise Hermitian inner product on the manifold, we get the standard $L^2$ product here denoted by
\[
\la\alpha,\beta\ra=\int_M\langle\alpha,\beta\rangle\vol\ \ \ \forall\alpha,\beta\in A^{p,q},
\]
which is surely well defined if $M$ is compact.
Then the operators
\begin{equation*}
d^*=-*d*,\ \ \ \mu^*=-*\c\mu*,\ \ \ \del^*=-*\delbar*,\ \ \ \delbar^*=-*\del*,\ \ \ \c\mu^*=-*\mu*,
\end{equation*}
are the $L^2$ formal adjoint operators respectively of $d,\mu,\del,\delbar,\c\mu$. Recall that 
\[
\Delta_{d}=dd^*+d^*d
\]
 is the Hodge Laplacian, and, as in the integrable case, set 
\begin{equation*}
\Delta_{\del}=\del\del^*+\del^*\del,\ \ \ \Delta_{\delbar}=\delbar\delbar^*+\delbar^*\delbar,
\end{equation*}
respectively as the $\del$ and $\delbar$ Laplacians. Again, as in the integrable case, set
\begin{equation*}
\Delta_{BC}=
\del\delbar\delbar^*\del^*+
\delbar^*\del^*\del\delbar+\del^*\delbar\delbar^*\del+\delbar^*\del\del^*\delbar
+\del^*\del+\delbar^*\delbar,
\end{equation*}
and
\begin{equation*}
\Delta_{A}= \del\delbar\delbar^*\del^*+
\delbar^*\del^*\del\delbar+
\del\delbar^*\delbar\del^*+\delbar\del^*\del\delbar^*+
\del\del^*+\delbar\delbar^*,
\end{equation*}
respectively as the Bott-Chern and the Aeppli Laplacians. Note that
\begin{equation}\label{bc-a-duality}
*\Delta_{BC}=\Delta_{A}*\ \ \ \Delta_{BC}*=*\Delta_{A}.
\end{equation}

If $M$ is compact, then we easily deduce the following relations
\begin{equation}\label{eq-char-harm-forms}
\begin{cases}
\Delta_{d}=0\ &\iff\ d=0,\ d*=0,\\
\Delta_{\del}=0\ &\iff\ \del=0,\ \delbar*=0,\\
\Delta_{\delbar}=0\ &\iff\ \delbar=0,\ \del*=0,\\
\Delta_{BC}=0\ &\iff \del=0,\ \delbar=0,\ \del\delbar*=0,\\
\Delta_{A}=0\ &\iff \del*=0,\ \delbar*=0,\ \del\delbar=0,
\end{cases}
\end{equation}
which characterize the spaces of harmonic forms
\begin{equation*}
\H^{k}_{d},\ \ \ \H^{p,q}_{\del},\ \ \ \H^{p,q}_{\delbar},\ \ \  \H^{p,q}_{BC},\ \ \ \H^{p,q}_{A},
\end{equation*}
defined as the spaces of forms which are in the kernel of the associated Laplacians.
All these Laplacians are elliptic operators on the almost Hermitian manifold $(M,J,\omega)$ (cf. \cite{Hi}, \cite{PT4}), implying that all the spaces of harmonic forms are finite dimensional when the manifold is compact. 

Now we introduce some notation and recall some well known facts about primitive forms.
We denote by
$$
L:\Lambda^kM\to\Lambda^{k+2}M\,,\quad \alpha\mapsto\omega\wedge\alpha
$$
the Lefschetz operator and by
$$
\Lambda:\Lambda^kM\to\Lambda^{k-2}M\,,\quad \Lambda=*^{-1}L*
$$
its adjoint.
A differential $k$-form $\alpha$ on $M$, for $k\leq n$, is said to be {\em primitive} if $\Lambda\alpha=0$, or equivalently if 
$$
L^{n-k+1}\alpha=0.
$$
 Then we have the following vector bundle decomposition (see e.g., \cite[p. 26, Th\'eor\`eme 3]{weil})
\begin{equation}\label{eq-prim-dec-forms}
\Lambda^kM=\bigoplus_{r\geq\max(k-n,0)}L^r(P^{k-2r}M),
\end{equation}
where we use
$$
P^{s}M:=\ker\big(\Lambda:\Lambda^{s}M\to\Lambda^{s-2}M\big)
$$
to denote the bundle of primitive $s$-forms. 
For any given $\beta\in P^kM$, we have the following formula (cf. \cite[p. 23, Th\'eor\`eme 2]{weil}) involving the Hodge $*$ operator and the Lefschetz operator
\begin{equation}\label{*-primitive}
*L^r\beta=(-1)^{\frac{k(k+1)}{2}}\frac{r!}{(n-k-r)!}L^{n-k-r}J\beta.
\end{equation}
We recall that the map $L^h:\Lambda^kM\to\Lambda^{k+2h}M$ is injective for $h+k\le n$ and is surjective for $h+k\ge n$. 

Furthermore, the decomposition above is compatible with the bidegree decomposition on the bundle of complex $k$-forms $\Lambda_\C^kM$ induced by $J$, that is 
$$
P_\C^kM=\bigoplus_{p+q=k}P^{p,q}M,
$$
where 
$$
P^{p,q}M=P^k_\C M\cap\Lambda^{p,q}M.
$$
In fact, we have
\begin{equation}\label{eq-prim-dec-forms-2}
\Lambda^{p,q}M=\bigoplus_{r\geq\max(p+q-n,0)}L^r(P^{p-r,q-r}M).
\end{equation}
Finally, let us set $P^s:=\Gamma(M,P^sM)$ and $P^{p,q}:=\Gamma(M,P^{p,q}M)$.

\section{Primitive decomposition of Bott-Chern harmonic $(k,k)$-forms}
In order to prove our main result, we will need the following lemmas.
The next one is well known, see for instance \cite[Theorem 3.2]{piovani-tardini} or \cite[Theorem 4.3]{PT4}. We include an outline of the proof here for the convenience of the reader.
\begin{lemma}\label{lemma elliptic}
Let $(M,J,\omega)$ be a compact almost K\"ahler manifold of real dimension $2n$. Let $f\in\cinf(M,\C)$ be a smooth complex valued function. If
\[
\omega^{n-1}\wedge\del\delbar f=0,
\]
then $f\in\C$ is a complex constant.
\begin{proof}
Let $V_1, \dots, V_n$ be a local frame of $T^{1,0}M$, with $\phi^1, \dots, \phi^n$ the dual coframe of $\Lambda^{1,0}M$, chosen such that
$\omega =  i \sum_{j} \phi^{j \overline j}. $
We can then write 
\begin{align*}
    \partial \overline \partial f &= \partial \left(\sum_{j=1}^{n} \overline{V_j}f \phi^{\overline j}\right)\\
    &= \sum_{i,j= 1}^{n} V_i \overline{V_j} f \phi^{i\overline j} + \sum_{j=1}^{n}   \overline{V_j} f \partial\phi^{\overline j}.
\end{align*}
The wedge product of $\omega^{n-1}$ with $\phi^{i\overline j}$ is zero unless $i=j$, therefore
\begin{align*}
    \omega^{n-1}\wedge \partial \overline \partial f &= -i(n-1)!\sum_{j=1}^{n} V_j \overline{V_j} f \vol + R(f) \vol
\end{align*}
where $R$ is a differential operator involving at most first order derivatives.
Setting this to zero tells us that $f$ is in the kernel of a strongly elliptic differential operator. In conjunction with the compactness of $M$, this implies that $f$ must be constant by the maximum principle.

\end{proof}
\end{lemma}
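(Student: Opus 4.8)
The plan is to reduce the pointwise hypothesis to the statement that $f$ lies in the kernel of a strongly elliptic second-order operator with no zeroth-order term, and then to deduce constancy. Observe first that $\del\delbar f$ is a $(1,1)$-form, and for any $(1,1)$-form $\gamma$ the primitive decomposition gives $\gamma\wedge\omega^{n-1}=(n-1)!\,(\Lambda\gamma)\,\vol$; hence the hypothesis is equivalent to $\Lambda\del\delbar f=0$, i.e. the $\omega$-trace of $\del\delbar f$ vanishes. To extract the analytic content I would pass to a local unitary coframe $\phi^1,\dots,\phi^n$ of $\Lambda^{1,0}M$ with $\omega=i\sum_j\phi^{j\overline j}$ and dual frame $V_1,\dots,V_n$. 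Writing $\delbar f=\sum_j(\overline{V_j}f)\,\phi^{\overline j}$ and applying $\del$ gives $\del\delbar f=\sum_{i,j}(V_i\overline{V_j}f)\,\phi^{i\overline j}+\sum_j(\overline{V_j}f)\,\del\phi^{\overline j}$. Using that $\phi^{i\overline j}\wedge\omega^{n-1}$ is a nonzero multiple of $\vol$ precisely when $i=j$ and vanishes otherwise, this collapses to
\[
\omega^{n-1}\wedge\del\delbar f=\Big(-i(n-1)!\sum_j V_j\overline{V_j}f+R(f)\Big)\vol ,
\]
where $R$ gathers the torsion contributions $\del\phi^{\overline j}$ and is a first-order operator in $f$, in particular carrying no zeroth-order term.

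Next I would verify strong ellipticity. The principal symbol of $f\mapsto\sum_j V_j\overline{V_j}f$ at a real covector $\xi$ is $\sum_j|\xi(V_j)|^2$, which is strictly positive for $\xi\neq0$ because $\{V_j,\overline{V_j}\}$ spans $T_\C M$; in fact it equals $\tfrac12|\xi|_g^2$. Thus the operator $P$ defined by $Pf\cdot\vol=\omega^{n-1}\wedge\del\delbar f$ is a strongly elliptic second-order operator with no zeroth-order term, and the hypothesis is exactly $Pf=0$.

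To conclude I would prefer an energy argument to the maximum principle. Since $(M,J,\omega)$ is almost K\"ahler we have $d\omega=0$, and comparing bidegrees shows $\del\omega=\delbar\omega=0$, so $\omega^{n-1}$ is $\del$-closed. The form $\eta:=\overline f\,\delbar f\wedge\omega^{n-1}$ has bidegree $(n-1,n)$, whence $d\eta=\del\eta$; integrating $\del\eta=\del\overline f\wedge\delbar f\wedge\omega^{n-1}+\overline f\,\del\delbar f\wedge\omega^{n-1}$ over $M$, Stokes' theorem kills the left-hand side and the hypothesis kills the second term, leaving $\int_M\del\overline f\wedge\delbar f\wedge\omega^{n-1}=0$. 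The same diagonal computation as above identifies $\del\overline f\wedge\delbar f\wedge\omega^{n-1}$ with $-i(n-1)!\,\langle\delbar f,\delbar f\rangle\,\vol$, forcing $\delbar f\equiv0$. Finally $\overline f$ satisfies the same hypothesis, since $\delbar\del=-\del\delbar$ on functions and conjugating gives $\omega^{n-1}\wedge\del\delbar\overline f=0$; rerunning the argument yields $\del f\equiv0$, hence $df=0$ and $f$ is constant on the connected manifold $M$.

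The step I expect to be the genuine obstacle is precisely this last passage from $Pf=0$ to constancy. Although $P$ is strongly elliptic, its lower-order coefficients (coming from $\Lambda\del\phi^{\overline j}$) are genuinely complex, and the scalar maximum principle can fail for complex functions under such operators — for instance $u''+iu'=0$ has the nonconstant periodic solution $e^{-i\theta}$ on $S^1$. The energy argument above sidesteps this entirely; alternatively one may first note that both $f$ and $\overline f$, hence $\real f$ and $\img f$, solve $Pf=0$, and that $iP$ maps real functions to real functions, so $iP$ is a bona fide real strongly elliptic operator without zeroth-order term to which Hopf's maximum principle applies on each real part.
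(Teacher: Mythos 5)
Your proof is correct, and after the shared local-frame computation it departs from the paper's argument in a genuinely different way. The paper stops at the identity $\omega^{n-1}\wedge\del\delbar f=\bigl(-i(n-1)!\sum_j V_j\overline{V_j}f+R(f)\bigr)\vol$ and concludes by ellipticity, compactness and the maximum principle; you instead run an energy argument: since $\eta:=\overline f\,\delbar f\wedge\omega^{n-1}$ has bidegree $(n-1,n)$ we have $d\eta=\del\eta$, and $\del\omega=0$ plus Stokes gives $\int_M\del\overline f\wedge\delbar f\wedge\omega^{n-1}=0$, which the same diagonal computation turns into $\int_M\langle\delbar f,\delbar f\rangle\vol=0$, hence $\delbar f\equiv 0$; applying this to $\overline f$ (legitimate, since $\del\delbar=-\delbar\del$ on functions makes the hypothesis conjugation-invariant) gives $\del f\equiv 0$, so $df=0$. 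What each approach buys: yours is self-contained, using only Stokes' theorem and the positivity of the pointwise norm, and it bypasses elliptic PDE theory entirely; the paper's is shorter but invokes the strong maximum principle as a black box. Your criticism of that last step is also well taken: for complex-valued functions and operators with complex lower-order coefficients, "strongly elliptic, no zeroth-order term, compact" alone does not force constancy, as your example $u''+iu'=0$ on $S^1$ with the nonconstant periodic solution $e^{-i\theta}$ shows. What rescues the paper's argument is precisely the reality observation you supply: since $\del\delbar=-\delbar\del$ on functions, $i\del\delbar$ (hence the operator $iP$ defined by $iPf\cdot\vol=\omega^{n-1}\wedge i\del\delbar f$) maps real functions to real functions, and both $\real f$ and $\img f$ satisfy the equation, so Hopf's maximum principle applies to each separately. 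So your proposal not only gives an independent proof but also makes explicit a point the paper's final step leaves implicit.
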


We will divide the proof of our main result Theorem \ref{thm-bc} into the following lemmas. In the first one we study the second order differential conditions in the characterisation \eqref{eq-char-harm-forms} of Bott-Chern and Aeppli harmonic forms.

\begin{lemma}\label{lemma-deldelbar}
Let $(M,J,\omega)$ be a compact almost K\"ahler manifold of real dimension $2n$. For any $k\in\N$ such that $2k\le n$, write any $(k,k)$-form $\psi$ as
\[
\psi=\sum_{m=0}^k \omega^{k-m}\wedge \alpha^{m,m},
\]
where every $(m,m)$-form $\alpha^{m,m}$ is primitive.
If $\psi$ satisfies both
\[
\del\delbar\psi=0\ \ \ \text{and}\ \ \  \del\delbar*\psi=0,
\]
then $\alpha^{0,0}\in\C$ is a complex constant. Moreover,
\[
\omega^{n-3}\wedge\del\delbar\alpha^{1,1}=0,
\]
i.e., $\del\delbar\alpha^{1,1}$ is primitive, and
\[
\omega^{n-4}\wedge\del\delbar\alpha^{2,2}=0.
\]
\end{lemma}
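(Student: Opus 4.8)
The plan is to turn the two hypotheses into a pair of linear equations among the forms $\gamma^m:=\del\delbar\alpha^{m,m}$ and then peel off their primitive components in a carefully chosen order. Since $(M,J,\omega)$ is almost K\"ahler we have $d\omega=0$, hence $\del\omega=\delbar\omega=0$ and $[\del,L]=[\delbar,L]=0$; therefore $\del\delbar$ commutes with $L$ and the condition $\del\delbar\psi=0$ becomes
\[
\sum_{m=0}^k L^{k-m}\gamma^m=0 \qquad (\mathrm{A}).
\]
For the second condition I would first expand $*\psi$ using \eqref{*-primitive} together with $J\alpha^{m,m}=\alpha^{m,m}$, obtaining $*\psi=\sum_{m=0}^k c_m L^{n-k-m}\alpha^{m,m}$ with $c_m=(-1)^m\tfrac{(k-m)!}{(n-m-k)!}\neq0$; commuting $\del\delbar$ past the $L$-powers again, $\del\delbar*\psi=0$ becomes
\[
\sum_{m=0}^k c_m L^{n-k-m}\gamma^m=0 \qquad (\mathrm{B}).
\]

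Next I would decompose each $\gamma^m$ (a $(m+1,m+1)$-form) into its primitive parts $\gamma^m=\sum_j L^j\beta^m_j$ and split $(\mathrm{A})$ and $(\mathrm{B})$ along the Lefschetz decomposition \eqref{eq-prim-dec-forms-2}, comparing the summands $L^t P^{2t'}$ degree by degree. The crucial preliminary observation is that the deepest components vanish for free: because $\alpha^{m,m}\in P^{2m}$ one has $L^{n-1-m}\alpha^{m,m}=0$ whenever $m\ge2$ and $L^{n-2-m}\alpha^{m,m}=0$ whenever $m\ge3$, so applying $\del\delbar$ shows that the pure-trace ($P^0$) component of $\gamma^m$ vanishes for $m\ge2$ and its primitive $(1,1)$ ($P^2$) component vanishes for $m\ge3$. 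This drastically cuts down the unknowns in the low-degree summands.

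The heart of the argument is then a two-step cascade. Projecting $(\mathrm{A})$ and $(\mathrm{B})$ onto the $P^0$-summand and using the injectivity of the relevant $L$-powers leaves only the two surviving pure traces $b_0,b_1$ (the coefficients of $\omega^{m+1}$ in $\gamma^m$ for $m=0,1$), which satisfy $b_0+b_1=0$ and $c_0b_0+c_1b_1=0$. The determinant of this $2\times2$ system is $c_1-c_0\neq0$, so $b_0=b_1=0$. Since $\omega^{n-1}\wedge\del\delbar\alpha^{0,0}$ equals $n!\,b_0\,\vol$, its vanishing together with Lemma \ref{lemma elliptic} forces $\alpha^{0,0}$ to be constant, whence $\gamma^0=\del\delbar\alpha^{0,0}=0$ and in particular its primitive part $\beta^0_0$ vanishes. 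Feeding $\beta^0_0=0$ back, the $P^2$-summands of $(\mathrm{A})$ and $(\mathrm{B})$ reduce to a $2\times2$ system in the primitive $(1,1)$-parts $\beta^1_1,\beta^2_2$ of $\gamma^1,\gamma^2$ with determinant $c_2-c_1\neq0$, forcing both to vanish. Reassembling the primitive decomposition, $\omega^{n-3}\wedge\del\delbar\alpha^{1,1}$ is exactly the sum of the (now vanishing) $P^2$ and $P^0$ parts of $\gamma^1$, and $\omega^{n-4}\wedge\del\delbar\alpha^{2,2}$ is the sum of the (vanishing) $P^2$ and $P^0$ parts of $\gamma^2$; both are therefore zero.

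The main obstacle, and the reason the two hypotheses must be used together and in this order, is that neither $(\mathrm{A})$ nor $(\mathrm{B})$ alone — nor the two taken naively — pins down the individual components: the raw systems are underdetermined. The argument only closes through the interplay of three ingredients: the automatic Lefschetz vanishing of the deepest components, the nonvanishing of the factorial determinants $c_1-c_0$ and $c_2-c_1$, and the use of the maximum principle (Lemma \ref{lemma elliptic}) to annihilate $\gamma^0$ \emph{before} treating the $P^2$ level. The remaining work is book-keeping: checking that all the $L$-powers used to isolate components are injective (or land in top degree) throughout the regime $2k\le n$, including the boundary case $2k=n$ relevant to dimension $8$.
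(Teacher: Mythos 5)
Your proof is correct and follows essentially the same route as the paper's: the same two equations coming from $\del\delbar\psi=0$ and $\del\delbar*\psi=0$, the same use of Lemma \ref{lemma elliptic} to make $\alpha^{0,0}$ constant (and hence kill $\del\delbar\alpha^{0,0}$) \emph{before} the second step, and the same pair of $2\times 2$ systems with nonvanishing factorial determinants. The only difference is bookkeeping: you isolate the Lefschetz components of $\del\delbar\alpha^{m,m}$ abstractly, via uniqueness of the primitive decomposition and injectivity of $L$-powers, whereas the paper performs the identical reductions concretely by wedging the two equations with $\omega^{k-1}$ and then $\omega^{k-2}$ and invoking primitivity to discard the deep terms.
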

\begin{proof}
Fix $2k\le n$. Every form $\alpha^{m,m}$ is primitive, i.e.,
\begin{equation}\label{eq-primitive-alpha}
\omega^{n-2m+1}\wedge\alpha^{m,m}=0.
\end{equation}
By \eqref{*-primitive}, we have the formula
\[
*\psi=\sum_{m=0}^k(-1)^m\frac{(k-m)!}{(n-k-m)!}\omega^{n-k-m}\wedge \alpha^{m,m}.
\]
Now, assume that $\del\delbar\psi=\del\delbar*\psi=0$. Since $d\omega=0$, it follows
\begin{align}
\label{eq-del-delbar-psi}
&0=\del\delbar\psi=\sum_{m=0}^k\omega^{k-m}\wedge \del\delbar\alpha^{m,m},\\
&0=\del\delbar*\psi=\sum_{m=0}^k(-1)^m\frac{(k-m)!}{(n-k-m)!}\omega^{n-k-m}\wedge \del\delbar\alpha^{m,m}.\label{eq-del-delbar-star-psi}
\end{align}
We want to compare \eqref{eq-del-delbar-psi} and \eqref{eq-del-delbar-star-psi}. Note that
\[
k-m\le n-k-m \ \ \ \iff \ \ \ 2k\le n,
\]
therefore we can compute the wedge product between \eqref{eq-del-delbar-psi} and $\omega$ to the power $n-k-m-(k-m)=n-2k$ and obtain
\begin{equation}\label{eq-omega-del-delbar-psi}
0=\omega^{n-2k}\wedge\del\delbar\psi=\sum_{m=0}^k\omega^{n-k-m}\wedge \del\delbar\alpha^{m,m}.
\end{equation}

If we take the wedge product of $\omega^{k-1}$ with both equations \eqref{eq-del-delbar-star-psi} and \eqref{eq-omega-del-delbar-psi}, we find
\begin{align*}
0=\omega^{k-1}\wedge\del\delbar*\psi&=\sum_{m=0}^k(-1)^m\frac{(k-m)!}{(n-k-m)!}\omega^{n-m-1}\wedge \del\delbar\alpha^{m,m}\\
&=\frac{k!}{(n-k)!}\omega^{n-1}\wedge\del\delbar\alpha^{0,0}-\frac{(k-1)!}{(n-k-1)!}\omega^{n-2}\wedge\del\delbar\alpha^{1,1}
\end{align*}
by \eqref{eq-primitive-alpha}, since $n-2m+1\le n-m-1$ iff $m\ge 2$.
In the same way, 
\begin{align*}
0=\omega^{k-1}\wedge\omega^{n-2k}\wedge\del\delbar\psi&=\sum_{m=0}^k\omega^{n-m-1}\wedge \del\delbar\alpha^{m,m}\\
&=\omega^{n-1}\wedge\del\delbar\alpha^{0,0}+\omega^{n-2}\wedge\del\delbar\alpha^{1,1}
\end{align*}
by \eqref{eq-primitive-alpha}. Now, thanks to the last two equations, we easily deduce
\[
\omega^{n-1}\wedge\del\delbar\alpha^{0,0}=0
\]
and
\[
\omega^{n-2}\wedge\del\delbar\alpha^{1,1}=0.
\]
In particular, this yields that $\alpha^{0,0}\in \C$ is a complex constant by Lemma \ref{lemma elliptic}.

If we take the wedge product of $\omega^{k-2}$ with both equations \eqref{eq-del-delbar-star-psi} and \eqref{eq-omega-del-delbar-psi}, we find
\begin{align*}
0=\omega^{k-2}\wedge\del\delbar*\psi&=\sum_{m=1}^k(-1)^m\frac{(k-m)!}{(n-k-m)!}\omega^{n-m-2}\wedge \del\delbar\alpha^{m,m}\\
&=-\frac{(k-1)!}{(n-k-1)!}\omega^{n-3}\wedge\del\delbar\alpha^{1,1}+\frac{(k-2)!}{(n-k-2)!}\omega^{n-4}\wedge\del\delbar\alpha^{2,2}
\end{align*}
by \eqref{eq-primitive-alpha}.
In the same way, 
\begin{align*}
0=\omega^{k-2}\wedge\omega^{n-2k}\wedge\del\delbar\psi&=\sum_{m=1}^k\omega^{n-m-2}\wedge \del\delbar\alpha^{m,m}\\
&=\omega^{n-3}\wedge\del\delbar\alpha^{1,1}+\omega^{n-4}\wedge\del\delbar\alpha^{2,2}
\end{align*}
by \eqref{eq-primitive-alpha}.
Now, thanks to the last two equations, we easily deduce
\[
\omega^{n-3}\wedge\del\delbar\alpha^{1,1}=0
\]
and
\[
\omega^{n-4}\wedge\del\delbar\alpha^{2,2}=0,
\]
which is the claim.
\end{proof}

\begin{remark}
If we take the wedge product of $\omega^{k-l}$, for $3\le l \le k-1$, with both equations \eqref{eq-del-delbar-star-psi} and \eqref{eq-omega-del-delbar-psi}, we find similar sums, but this time we have three or more addends. This does not imply, in general, that every addend is equal to 0.
\end{remark}

In the next lemma we study the first order differential conditions in the characterisation \eqref{eq-char-harm-forms} of Bott-Chern harmonic forms.

\begin{lemma}\label{lemma-del}
Let $(M,J,\omega)$ be a compact almost K\"ahler manifold of real dimension $2n$. For any $k\in\N$ such that $2k\le n$, write any $(k,k)$-form $\psi$ as
\[
\psi=\sum_{m=0}^k \omega^{k-m}\wedge \alpha^{m,m},
\]
where every $(m,m)$-form $\alpha^{m,m}$ is primitive.
Assume that $\alpha^{0,0}\in\C$ is a complex constant. If $\del\psi=0$, then $\del\alpha^{1,1}$ is primitive, i.e., 
\[
\omega^{n-2}\wedge\del\alpha^{1,1}=0.
\]
If $\delbar\psi=0$, then $\delbar\alpha^{1,1}$ is primitive, i.e.,
\[
\omega^{n-2}\wedge\delbar\alpha^{1,1}=0.
\]
\end{lemma}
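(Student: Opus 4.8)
The plan is to follow the template of the proof of Lemma \ref{lemma-deldelbar}, but with the single first-order operator $\del$ (respectively $\delbar$) in place of $\del\delbar$, and to isolate the $m=1$ summand by wedging the closedness relation with an appropriate power of $\omega$.

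First I would exploit that $\del$ and $\delbar$ act as graded derivations. Since $d\omega=0$ on an almost K\"ahler manifold, each bidegree component of $d\omega$ vanishes, in particular $\del\omega=\delbar\omega=0$; and because the wedge of pure-bidegree forms is again of pure bidegree, projecting the Leibniz rule for $d$ onto a fixed bidegree shows that $\del$ and $\delbar$ satisfy the graded Leibniz rule. Consequently $\del\omega^{j}=0$ for all $j$, and
\[
\del\psi=\sum_{m=0}^k\omega^{k-m}\wedge\del\alpha^{m,m}.
\]
As $\alpha^{0,0}$ is a constant, the $m=0$ term drops out, so the hypothesis $\del\psi=0$ reads
\[
\sum_{m=1}^k\omega^{k-m}\wedge\del\alpha^{m,m}=0.
\]

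Next I would draw the primitivity consequence for the higher summands. Applying the derivation $\del$ to the primitivity relation \eqref{eq-primitive-alpha}, $\omega^{n-2m+1}\wedge\alpha^{m,m}=0$, and using $\del\omega=0$ once more, gives
\[
\omega^{n-2m+1}\wedge\del\alpha^{m,m}=0,\qquad 1\le m\le k.
\]
For $m\ge2$ one has $n-m-1\ge n-2m+1$, so $\omega^{n-m-1}=\omega^{m-2}\wedge\omega^{n-2m+1}$ and therefore $\omega^{n-m-1}\wedge\del\alpha^{m,m}=0$. I would then wedge the relation $\sum_{m=1}^k\omega^{k-m}\wedge\del\alpha^{m,m}=0$ with $\omega^{n-k-1}$, which is legitimate because $n-k-1\ge0$ follows from $2k\le n$; the $m$-th summand becomes $\omega^{n-m-1}\wedge\del\alpha^{m,m}$, every term with $m\ge2$ vanishes by the preceding line, and what survives is exactly
\[
\omega^{n-2}\wedge\del\alpha^{1,1}=0,
\]
i.e.\ $\del\alpha^{1,1}$ is primitive. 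The $\delbar$ statement is obtained verbatim, replacing $\del$ by $\delbar$ and using $\delbar\omega=0$ and $\delbar\alpha^{0,0}=0$.

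The calculations are routine; the only delicate point is the bookkeeping of the powers of $\omega$. The decisive observation is the equivalence $n-m-1\ge n-2m+1\iff m\ge2$, which is precisely what lets the multiplication by $\omega^{n-k-1}$ annihilate every summand with $m\ge2$ through its own primitivity while preserving the term $\omega^{n-2}\wedge\del\alpha^{1,1}$. I would stress that the primitivity of $\alpha^{1,1}$ is by itself vacuous here, giving only the identically true relation $\omega^{n-1}\wedge\del\alpha^{1,1}=0$ (a form of degree $2n+1$, hence automatically zero); the gain of one power of $\omega$ genuinely comes from the closedness hypothesis $\del\psi=0$.
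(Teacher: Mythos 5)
Your proof is correct and is essentially the paper's own argument: expand $\del\psi=0$ (respectively $\delbar\psi=0$) using $\del\omega=\delbar\omega=0$, drop the $m=0$ term since $\alpha^{0,0}$ is constant, wedge with $\omega^{n-k-1}$, and kill every summand with $m\ge2$ via the differentiated primitivity relation $\omega^{n-2m+1}\wedge\del\alpha^{m,m}=0$, leaving exactly $\omega^{n-2}\wedge\del\alpha^{1,1}=0$. The paper compresses the $m\ge2$ vanishing into the citation ``by \eqref{eq-primitive-alpha}''; your write-up merely makes that bookkeeping (and the graded Leibniz rule behind it) explicit.
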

\begin{proof}
Fix $2k\le n$. 
Assume that $\del\psi=0$ and $\alpha^{0,0}\in\C$. Since $d\omega=0$, it follows
\begin{align}
&0=\del\psi=\sum_{m=1}^k\omega^{k-m}\wedge \del\alpha^{m,m}.\label{eq-del-psi}
\end{align}
If we take the wedge product of $\omega^{n-k-1}$ and \eqref{eq-del-psi}, we find
\begin{align*}
0=\omega^{n-k-1}\wedge\del\psi&=\sum_{m=1}^k\omega^{n-m-1}\wedge \del\alpha^{m,m}=\omega^{n-2}\wedge\del\alpha^{1,1},
\end{align*}
by \eqref{eq-primitive-alpha}.

Assume now that $\delbar\psi=0$ and $\alpha^{0,0}\in\C$. Since $d\omega=0$, it follows
\begin{align}
&0=\delbar\psi=\sum_{m=1}^k\omega^{k-m}\wedge \delbar\alpha^{m,m}.\label{eq-delbar-psi}
\end{align}
If we take the wedge product of $\omega^{n-k-1}$ and \eqref{eq-delbar-psi}, we find
\begin{align*}
0=\omega^{n-k-1}\wedge\delbar\psi&=\sum_{m=1}^k\omega^{n-m-1}\wedge \delbar\alpha^{m,m}=\omega^{n-2}\wedge\delbar\alpha^{1,1},
\end{align*}
by \eqref{eq-primitive-alpha}, and this ends the proof.
\end{proof}

\begin{remark}
If we take the wedge product of $\omega^{n-k-l}$, for $2\le l \le n-k-1$, with both equations \eqref{eq-del-psi} and \eqref{eq-delbar-psi}, we find similar sums, but this time we have two or more addends. This does not imply, in general, that every addend is equal to 0.
\end{remark}

Finally, in the following lemma we study the first order differential conditions in the characterisation \eqref{eq-char-harm-forms} of Aeppli harmonic forms.

\begin{lemma}\label{lemma-delstar}
Let $(M,J,\omega)$ be a compact almost K\"ahler manifold of real dimension $2n$. For any $k\in\N$ such that $2k\le n$, write any $(k,k)$-form $\psi$ as
\[
\psi=\sum_{m=0}^k \omega^{k-m}\wedge \alpha^{m,m},
\]
where every $(m,m)$-form $\alpha^{m,m}$ is primitive.
Assume that $\alpha^{0,0}\in\C$ is a complex constant. If $\del*\psi=0$, then $\del\alpha^{1,1}$ is primitive, i.e., 
\[
\omega^{n-2}\wedge\del\alpha^{1,1}=0.
\]
If $\delbar*\psi=0$, then $\delbar\alpha^{1,1}$ is primitive, i.e.,
\[
\omega^{n-2}\wedge\delbar\alpha^{1,1}=0.
\]
\end{lemma}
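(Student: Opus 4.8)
### Proof Strategy for Lemma \ref{lemma-delstar}

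The plan is to mimic the structure of Lemma \ref{lemma-del}, but working with the Hodge dual form $*\psi$ in place of $\psi$ itself. The key observation is that the Aeppli harmonic conditions $\del*\psi = 0$ and $\delbar*\psi = 0$ become statements about $*\psi$, and by \eqref{bc-a-duality} (or more directly, by the explicit formula \eqref{*-primitive}) the Hodge dual of a $(k,k)$-form has an analogous primitive expansion to which the same manipulations apply. First I would use \eqref{*-primitive} to write out $*\psi$ explicitly in its primitive decomposition as
\[
*\psi=\sum_{m=0}^k(-1)^m\frac{(k-m)!}{(n-k-m)!}\omega^{n-k-m}\wedge \alpha^{m,m},
\]
exactly as in the proof of Lemma \ref{lemma-deldelbar}.

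Next I would impose $\del*\psi=0$. Since $d\omega=0$ and $\alpha^{0,0}$ is assumed constant (so it contributes no derivative term), applying $\del$ term by term yields
\[
0=\del*\psi=\sum_{m=1}^k(-1)^m\frac{(k-m)!}{(n-k-m)!}\omega^{n-k-m}\wedge \del\alpha^{m,m}.
\]
The goal is now to isolate the $\del\alpha^{1,1}$ term. I would wedge this equation with an appropriate power of $\omega$ so that every summand with $m\ge 2$ is annihilated by the primitivity relation \eqref{eq-primitive-alpha}, namely $\omega^{n-2m+1}\wedge\alpha^{m,m}=0$. Concretely, wedging with $\omega^{k-1}$ sends the $m$-th term to a multiple of $\omega^{n-m-1}\wedge\del\alpha^{m,m}$; since $\del\alpha^{m,m}$ lives in bidegree adjacent to $\alpha^{m,m}$, the relevant vanishing $\omega^{n-2m+1}\wedge\alpha^{m,m}=0$ forces the $m\ge 2$ contributions to drop out, leaving only the $m=1$ term $\omega^{n-2}\wedge\del\alpha^{1,1}$ up to a nonzero constant. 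The same reasoning with $\delbar$ in place of $\del$, using $\delbar*\psi=0$, gives $\omega^{n-2}\wedge\delbar\alpha^{1,1}=0$.

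The main subtlety I expect to encounter is bookkeeping the degree shift: when $\del$ acts on the primitive $(m,m)$-form $\alpha^{m,m}$, the result $\del\alpha^{m,m}$ is an $(m+1,m)$-form which need not be primitive, so I cannot directly invoke primitivity of $\del\alpha^{m,m}$. The trick is that I am only wedging with powers of $\omega$, and the vanishing I exploit is that of $\omega^{n-2m+1}\wedge\alpha^{m,m}$ \emph{before} differentiating; since $\del$ commutes with $L=\omega\wedge$ (as $\del\omega=0$), I have $\omega^{j}\wedge\del\alpha^{m,m}=\del(\omega^{j}\wedge\alpha^{m,m})$, so the needed identity $\omega^{n-m-1}\wedge\del\alpha^{m,m}=0$ for $m\ge 2$ follows by applying $\del$ to the already-vanishing $\omega^{n-m-1}\wedge\alpha^{m,m}=0$ (valid precisely when $n-m-1\ge n-2m+1$, i.e. $m\ge 2$). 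This is the same mechanism used in the previous two lemmas, so once the commutation $\del L = L\del$ is invoked the argument is essentially identical to Lemma \ref{lemma-del}, merely with the dual exponents supplied by \eqref{*-primitive}.
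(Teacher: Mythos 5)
Your proposal is correct and follows essentially the same route as the paper's proof: expand $*\psi$ via \eqref{*-primitive}, apply $\del$ (resp. $\delbar$) using $d\omega=0$ and the constancy of $\alpha^{0,0}$, wedge with $\omega^{k-1}$, and observe that all terms with $m\ge 2$ vanish by primitivity since $n-m-1\ge n-2m+1$ exactly when $m\ge 2$, leaving only a nonzero multiple of $\omega^{n-2}\wedge\del\alpha^{1,1}$ (resp. $\omega^{n-2}\wedge\delbar\alpha^{1,1}$). Your explicit justification of the vanishing via the commutation $\del L=L\del$ applied to $\omega^{n-2m+1}\wedge\alpha^{m,m}=0$ is precisely the mechanism the paper invokes implicitly when it cites \eqref{eq-primitive-alpha}.
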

\begin{proof}
Fix $2k\le n$. 
By \eqref{*-primitive}, we have the formula
\[
*\psi=\sum_{m=0}^k(-1)^m\frac{(k-m)!}{(n-k-m)!}\omega^{n-k-m}\wedge \alpha^{m,m}.
\]
Assume that $\del*\psi=0$ and $\alpha^{0,0}\in\C$. Since $d\omega=0$, it follows
\begin{align}
&0=\del*\psi=\sum_{m=1}^k(-1)^m\frac{(k-m)!}{(n-k-m)!}\omega^{n-k-m}\wedge \del\alpha^{m,m}.\label{eq-del-star-psi}.
\end{align}
If we take the wedge product of $\omega^{k-1}$ and \eqref{eq-del-star-psi}, we find
\begin{align*}
0=\omega^{k-1}\wedge\del*\psi&=\sum_{m=1}^k(-1)^m\frac{(k-m)!}{(n-k-m)!}\omega^{n-m-1}\wedge \del\alpha^{m,m}\\
&=-\frac{(k-1)!}{(n-k-1)!}\omega^{n-2}\wedge\del\alpha^{1,1}.
\end{align*}
by \eqref{eq-primitive-alpha}, and this is equivalent to the first claim.

Now, assume that $\delbar*\psi=0$ and $\alpha^{0,0}\in\C$. Since $d\omega=0$, it follows
\begin{align}
&0=\delbar*\psi=\sum_{m=1}^k(-1)^m\frac{(k-m)!}{(n-k-m)!}\omega^{n-k-m}\wedge \delbar\alpha^{m,m}\label{eq-delbar-star-psi}.
\end{align}
If we take the wedge product of $\omega^{k-1}$ and \eqref{eq-delbar-star-psi}, we find
\begin{align*}
0=\omega^{k-1}\wedge\delbar*\psi&=\sum_{m=1}^k(-1)^m\frac{(k-m)!}{(n-k-m)!}\omega^{n-m-1}\wedge \delbar\alpha^{m,m}\\
&=-\frac{(k-1)!}{(n-k-1)!}\omega^{n-2}\wedge\delbar\alpha^{1,1},
\end{align*}
by \eqref{eq-primitive-alpha}, and this is equivalent to the second claim.
\end{proof}

We can now prove the following properties of Bott-Chern and Aeppli harmonic $(k,k)$-forms on a compact almost K\"ahler manifold.

\begin{theorem}\label{thm-bc}
Let $(M,J,\omega)$ be a compact almost K\"ahler manifold of real dimension $2n$. For any $k\in\N$, write any $(k,k)$-form $\psi$ as
\[
\psi=\sum_{m=0}^{\min(k,n-k)} \omega^{k-m}\wedge \alpha^{m,m},
\]
where every $(m,m)$-form $\alpha^{m,m}$ is primitive. If $\psi$ is Bott-Chern or Aeppli harmonic, then $\alpha^{0,0}\in\C$ is a complex constant, $\del\alpha^{1,1},\delbar\alpha^{1,1},\del\delbar\alpha^{1,1}$ are primitive, i.e.,
\[
\omega^{n-2}\wedge\del\alpha^{1,1}=\omega^{n-2}\wedge\delbar\alpha^{1,1}=\omega^{n-3}\wedge\del\delbar\alpha^{1,1}=0,
\]
 and
\[
\omega^{n-4}\wedge\del\delbar\alpha^{2,2}=0.
\]
\end{theorem}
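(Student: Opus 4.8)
The plan is to reduce everything to the three preceding lemmas, which already cover the range $2k\le n$, and then to dispose of the remaining range $2k>n$ by Hodge-$*$ duality.

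First I would treat the case $2k\le n$, where $\min(k,n-k)=k$ and the decomposition of $\psi$ matches the one used in Lemmas \ref{lemma-deldelbar}, \ref{lemma-del} and \ref{lemma-delstar}. The key point is that the characterisation \eqref{eq-char-harm-forms} supplies, in both the Bott-Chern and the Aeppli case, exactly the hypotheses needed. If $\psi\in\H^{k,k}_{BC}$, then $\del\psi=\delbar\psi=0$ and $\del\delbar*\psi=0$; applying $\del$ to $\delbar\psi=0$ gives $\del\delbar\psi=0$, so both hypotheses $\del\delbar\psi=\del\delbar*\psi=0$ of Lemma \ref{lemma-deldelbar} hold. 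If instead $\psi\in\H^{k,k}_{A}$, then $\del*\psi=\delbar*\psi=0$ and $\del\delbar\psi=0$; now applying $\del$ to $\delbar*\psi=0$ gives $\del\delbar*\psi=0$, and again Lemma \ref{lemma-deldelbar} applies. In either case we obtain that $\alpha^{0,0}$ is constant and that $\omega^{n-3}\wedge\del\delbar\alpha^{1,1}=\omega^{n-4}\wedge\del\delbar\alpha^{2,2}=0$. With $\alpha^{0,0}$ constant in hand, the first-order conclusions follow by feeding $\del\psi=\delbar\psi=0$ into Lemma \ref{lemma-del} in the Bott-Chern case, and $\del*\psi=\delbar*\psi=0$ into Lemma \ref{lemma-delstar} in the Aeppli case; either way $\omega^{n-2}\wedge\del\alpha^{1,1}=\omega^{n-2}\wedge\delbar\alpha^{1,1}=0$.

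For the remaining range $2k>n$ I would argue by duality. By \eqref{bc-a-duality}, $*$ interchanges Bott-Chern and Aeppli harmonic forms, so $*\psi$ is an $(n-k,n-k)$-harmonic form of the opposite type with $2(n-k)<n$, hence falls under the case already handled. It then remains to relate the primitive components of $*\psi$ to those of $\psi$. Since $\min(k,n-k)=n-k$ here, applying \eqref{*-primitive} term by term to $\psi=\sum_{m=0}^{n-k}\omega^{k-m}\wedge\alpha^{m,m}$ and using $J\alpha^{m,m}=\alpha^{m,m}$ yields
\[
*\psi=\sum_{m=0}^{n-k}(-1)^m\frac{(k-m)!}{(n-k-m)!}\,\omega^{(n-k)-m}\wedge\alpha^{m,m},
\]
which is precisely the primitive decomposition of $*\psi$, with primitive components $c_m\,\alpha^{m,m}$ where $c_m=(-1)^m(k-m)!/(n-k-m)!\ne 0$ for every $0\le m\le n-k$. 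The conclusions of the first case, applied to $*\psi$, are assertions about $c_0\alpha^{0,0}$, $c_1\alpha^{1,1}$ and $c_2\alpha^{2,2}$; since each $c_m$ is a nonzero scalar they transfer verbatim to $\alpha^{0,0}$, $\alpha^{1,1}$, $\alpha^{2,2}$, giving the theorem. When $\min(k,n-k)<2$ the component $\alpha^{2,2}$, and possibly $\alpha^{1,1}$, is simply absent, i.e. zero, and the corresponding assertions are vacuous.

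The routine parts are the first-order reductions, which are immediate consequences of the lemmas once the correct hypotheses are extracted from \eqref{eq-char-harm-forms}. The one step requiring genuine care is the duality argument: one must check that $*$ sends the given harmonic class into the correct dual space via \eqref{bc-a-duality}, and --- this is the real content --- that the $*$-image of the primitive decomposition of $\psi$ is again in primitive form with components that are \emph{nonzero} multiples of the $\alpha^{m,m}$, so that the vanishing statements pass back. This is exactly what \eqref{*-primitive} guarantees, together with the fact that $(k-m)!/(n-k-m)!$ never vanishes in the relevant range.
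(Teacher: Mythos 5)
Your proposal is correct and follows essentially the same route as the paper's own proof: for $2k\le n$ it extracts the hypotheses of Lemmas \ref{lemma-deldelbar}, \ref{lemma-del} and \ref{lemma-delstar} from the characterisation \eqref{eq-char-harm-forms}, and for $2k>n$ it applies Hodge-$*$ duality via \eqref{bc-a-duality} together with \eqref{*-primitive}, noting that the primitive components of $*\psi$ are nonzero scalar multiples of the $\alpha^{m,m}$. The only difference is presentational: you spell out explicitly why $\del\delbar\psi=\del\delbar*\psi=0$ holds in each case and why the scalars $(-1)^m(k-m)!/(n-k-m)!$ are nonzero, points the paper leaves as immediate observations.
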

\begin{proof}
Let us begin with the case $2k\le n$.
Note that if $\psi$ is Bott-Chern or Aeppli harmonic, then $\psi$ satisfies both
\[
\del\delbar\psi=0,\ \ \ \text{and}\ \ \ \del\delbar*\psi=0,
\]
and thus we can apply Lemma \ref{lemma-deldelbar}. Finally, if $\psi$ is Bott-Chern harmonic, we can apply Lemma \ref{lemma-del}. Conversely, if $\psi$ is Aeppli harmonic, we can apply Lemma \ref{lemma-delstar}. This concludes the proof of the case $2k\le n$.

Conversely, assume now that $2k\ge n$. By \eqref{*-primitive}, we have the formula
\[
*\psi=\sum_{m=0}^{n-k}(-1)^m\frac{(k-m)!}{(n-k-m)!}\omega^{n-k-m}\wedge \alpha^{m,m}.
\]
Set $l:=n-k$ and
\[
\beta^{m,m}:=(-1)^m\frac{(k-m)!}{(n-k-m)!}\alpha^{m,m}.
\]
We note that $2l\le n$ and the primitive decomposition of $*\psi\in A^{l,l}$ is
\[
*\psi=\sum_{m=0}^{l} \omega^{l-m}\wedge {\beta}^{m,m}
\]
By \eqref{bc-a-duality}, we know that $\psi$ is Bott-Chern harmonic iff $*\psi$ is Aeppli harmonic, and $\psi$ is Aeppli harmonic iff $*\psi$ is Bott-Chern harmonic. Therefore by the first part of the theorem applied to $*\psi$ we conclude that $\beta^{0,0}\in\C$, $\del\beta^{1,1},\delbar\beta^{1,1},\del\delbar\beta^{1,1}$ are primitive
 and
\[
\omega^{n-4}\wedge\del\delbar\beta^{2,2}=0.
\]
It is an observation that the same holds respectively for $\alpha^{0,0},\alpha^{1,1},\alpha^{2,2}$, ending the proof.
\end{proof}

Since the coefficient of $\omega^k$ is constant in the primitive decomposition of any Bott-Chern or Aeppli harmonic $(k,k)$-form, we can state the following characterisations of the spaces of Bott-Chern and Aeppli harmonic $(k,k)$-forms.

\begin{theorem}\label{cor bc decomp}
Let $(M,J,\omega)$ be a compact almost K\"ahler manifold of real dimension $2n$. For any $k\in \mathbb{N}$ we have
$$\mathcal{H}^{k,k}_{BC} = \mathbb{C}\, \omega^k \oplus \left( \mathcal{H}_{BC}^{k,k} \cap \ker L^{n-k} \right)$$
and
$$\mathcal{H}^{k,k}_{A} = \mathbb{C}\, \omega^k \oplus \left( \mathcal{H}_{A}^{k,k} \cap \ker L^{n-k} \right).$$
\end{theorem}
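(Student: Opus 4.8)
The plan is to deduce Theorem \ref{cor bc decomp} as a reformulation of Theorem \ref{thm-bc}, which has already done the substantive analytic work. The key observation is that the statement $\mathcal{H}^{k,k}_D = \mathbb{C}\,\omega^k \oplus (\mathcal{H}^{k,k}_D \cap \ker L^{n-k})$ is equivalent to the single assertion that, in the primitive decomposition $\psi = \sum_{m} \omega^{k-m}\wedge\alpha^{m,m}$ of any $D$-harmonic $(k,k)$-form, the top coefficient $\alpha^{0,0}$ is a constant. So the entire content reduces to extracting that one fact from the previous theorem and then verifying it assembles into a direct sum decomposition.

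First I would recall the primitive decomposition \eqref{eq-prim-dec-forms-2} applied to $\psi\in A^{k,k}$, writing $\psi = \omega^k\alpha^{0,0} + \sum_{m\geq 1}\omega^{k-m}\wedge\alpha^{m,m}$ where $\alpha^{0,0}\in \cinf(M,\C)$ is the $(0,0)$-component. By Theorem \ref{thm-bc}, if $\psi\in\mathcal{H}^{k,k}_{BC}$ (resp.\ $\mathcal{H}^{k,k}_A$), then $\alpha^{0,0}\in\C$ is a complex constant. This immediately shows $\psi - \alpha^{0,0}\omega^k$ has vanishing $L^k P^0$-component, which is precisely the condition of lying in $\ker L^{n-k}$: indeed the $L^k(P^0)$ summand of the primitive decomposition \eqref{eq-prim-dec-forms} is exactly the part of a $2k$-form that survives under $L^{n-k}$, since $L^{n-k}$ annihilates every $L^r(P^{k-2r})$ with $r<k$ (these are primitive $(k-2r)$-forms sitting below the top Lefschetz power) while $L^{n-k}(\omega^k)=\omega^n\neq 0$ pointwise. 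Thus $\psi - \alpha^{0,0}\omega^k \in \mathcal{H}^{k,k}_{BC}\cap\ker L^{n-k}$ once we note this difference is still $BC$-harmonic, being a $\C$-linear combination of harmonic forms ($\omega^k$ itself is $D$-harmonic on an almost K\"ahler manifold since $d\omega=0$ forces $\del\omega^k=\delbar\omega^k=0$ and $\del\delbar*\omega^k$, $\del\delbar\omega^k$ vanish).

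Next I would check that the sum is direct. An element of $\mathbb{C}\,\omega^k \cap \ker L^{n-k}$ is of the form $c\,\omega^k$ with $L^{n-k}(c\,\omega^k) = c\,\omega^n = 0$; since $\omega^n$ is nowhere vanishing on the compact manifold this forces $c=0$. Hence the intersection is trivial and the decomposition is a genuine internal direct sum. Combining this with the spanning statement from the previous paragraph gives the asserted equality for both $D = BC$ and $D = A$ simultaneously, the two cases being handled identically since Theorem \ref{thm-bc} covers both.

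I expect the only genuine subtlety is the clean identification of $\ker L^{n-k}$ with ``vanishing $L^k P^0$-component'', so I would state this as the load-bearing step. Concretely, one uses that $L^{n-k}\colon \Lambda^{2k}M \to \Lambda^{2n-2k}M$ has kernel equal to $\bigoplus_{r\geq\max(2k-n,0),\, r<k} L^r(P^{2k-2r}M)$, i.e.\ all primitive summands except the very top one $L^k(P^0 M)$, which follows from the injectivity/surjectivity properties of the Lefschetz power recalled after \eqref{*-primitive}: $L^{n-k}$ restricted to $L^k(P^0)$ is the isomorphism $\omega^k\mapsto\omega^n$ onto $L^n(P^0)$, while on each lower summand $L^r(P^{2k-2r})$ with $r<k$ the total power $L^{n-k+r}$ applied to a primitive $(2k-2r)$-form already exceeds the vanishing threshold $L^{n-(2k-2r)+1}$. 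All of this is routine bookkeeping with the Lefschetz decomposition, so the heart of the result remains Theorem \ref{thm-bc}, and this final theorem is essentially its packaging into a direct-sum statement.
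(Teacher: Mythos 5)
Your proposal is correct and follows essentially the same route as the paper: invoke Theorem \ref{thm-bc} to get that $\alpha^{0,0}$ is constant, then observe via the primitivity relations $\omega^{n-2m+1}\wedge\alpha^{m,m}=0$ that the remaining summands lie in $\ker L^{n-k}$, the reverse inclusion being immediate. You are somewhat more explicit than the paper about points it dismisses as trivial (the $BC$/$A$-harmonicity of $\omega^k$ and the triviality of $\C\,\omega^k\cap\ker L^{n-k}$), which is fine; the only blemish is a harmless typo where you write $L^r(P^{k-2r})$ for the summands of $\Lambda^{2k}M$ instead of $L^r(P^{2k-2r})$, corrected in your final paragraph.
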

\begin{proof}
Let us consider the primitive decomposition of a Bott-Chern or Aeppli harmonic $(k,k)$-form $\psi$, i.e.,
\[
\psi=\sum_{m=0}^{\min(k,n-k)} \omega^{k-m}\wedge \alpha^{m,m}.
\]
Thanks to Theorem \ref{thm-bc}, we know that the coefficient of $\omega^k$, denoted by $\alpha^{0,0}$, is a complex constant. Since $\omega^{n-2m+1}\wedge\alpha^{m,m}=0$, therefore
\[
\omega^{n-k}\wedge\omega^{k-m}\wedge\alpha^{m,m}=\omega^{n-m}\wedge\alpha=0
\]
for any $m\ge1$. This proves the two inclusions $\subseteq$. The other two inclusions $\supseteq$ are trivial.
\end{proof}

\begin{remark}
When we consider the case of Theorem \ref{cor bc decomp} when $k = 1$, we recover the results of Tardini and the second author, \cite[Theorems 3.2 and 3.3]{piovani-tardini}. Namely, we have a complete decomposition into primitive harmonic forms
$$\mathcal{H}_{BC}^{1,1} = \mathbb{C}\, \omega \oplus \left(\mathcal{H}_{BC}^{1,1}\cap P^{1,1}\right), $$
$$\mathcal{H}_{A}^{1,1} = \mathbb{C}\, \omega \oplus \left(\mathcal{H}_{A}^{1,1}\cap P^{1,1}\right). $$
Corollary \ref{cor counterexample} will show that a complete decomposition into primitive harmonic forms does not hold in higher bidegrees $(k,k)$ with $k\ge 2$.
\end{remark}

The previous corollary can be further specialized in real dimension 8 for bidegree $(2,2)$.

\begin{corollary}\label{cor bc 8}
Let $(M,J,\omega)$ be a compact almost K\"ahler manifold of real dimension $8$. We have
\[
\H^{2,2}_{BC}=\C\,\omega^2\oplus\left\{\omega\wedge\alpha+\beta\,|\,\alpha\in P^{1,1},\,\beta\in P^{2,2},\,\omega\wedge\del\alpha+\del\beta=\delbar\alpha=\delbar\beta=0\right\},
\]
and
\[
\H^{2,2}_{A}=\C\,\omega^2\oplus\left\{\omega\wedge\alpha+\beta\,|\,\alpha\in P^{1,1},\,\beta\in P^{2,2},\,\omega\wedge\del\alpha-\del\beta=\delbar\alpha=\delbar\beta=0\right\}.
\]
\end{corollary}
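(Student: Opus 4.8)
The plan is to deduce this from Theorems \ref{thm-bc} and \ref{cor bc decomp}, reducing everything to a computation with the primitive decomposition in the special case $n=4$, $k=2$. By Theorem \ref{cor bc decomp} we have $\H^{2,2}_{BC}=\C\,\omega^2\oplus(\H^{2,2}_{BC}\cap\ker L^2)$, and any $(2,2)$-form lying in $\ker L^2$ has primitive decomposition $\psi=\omega\wedge\alpha+\beta$ with $\alpha\in P^{1,1}$, $\beta\in P^{2,2}$ (the $\omega^2$-component is killed by $L^2$, while $L^2(\omega\wedge\alpha)=\omega^3\wedge\alpha$ and $L^2\beta$ already vanish by primitivity). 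So it suffices to characterise when such a $\psi$ is Bott-Chern harmonic. First I would record, using \eqref{*-primitive} with $n=4$, that $*\psi=-\omega\wedge\alpha+\beta$; then, since $d\omega=0$, the three conditions in \eqref{eq-char-harm-forms} characterising $\H^{2,2}_{BC}$ become
\begin{equation*}
\del\psi=\omega\wedge\del\alpha+\del\beta=0,\qquad \delbar\psi=\omega\wedge\delbar\alpha+\delbar\beta=0,\qquad \del\delbar*\psi=-\omega\wedge\del\delbar\alpha+\del\delbar\beta=0.
\end{equation*}

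The reverse inclusion is immediate: if $\omega\wedge\del\alpha+\del\beta=0$ and $\delbar\alpha=\delbar\beta=0$, then all three displayed equations hold (the last because $\del\delbar\alpha=\del\delbar\beta=0$), so $\psi\in\H^{2,2}_{BC}$. For the forward inclusion the first condition is exactly $\del\psi=0$, so the real content is to upgrade $\delbar\psi=0$ to the two separate equations $\delbar\alpha=0$ and $\delbar\beta=0$. Here I would invoke Theorem \ref{thm-bc}, which for $n=4$ tells us that both $\delbar\alpha$ and $\del\delbar\alpha$ are primitive, i.e. $\Lambda\delbar\alpha=0$ and $\Lambda\del\delbar\alpha=0$. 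The main obstacle is that primitivity of $\delbar\alpha$ alone does not force it to vanish; to close the gap I would use the almost K\"ahler identity $[\Lambda,\del]=i\delbar^*$ of Cirici--Wilson \cite{cirici-wilson-2}, which gives
\begin{equation*}
\delbar^*\delbar\alpha=-i[\Lambda,\del]\delbar\alpha=-i\,\Lambda\del\delbar\alpha+i\,\del\Lambda\delbar\alpha=0.
\end{equation*}
Pairing against $\alpha$ and using compactness of $M$, $\|\delbar\alpha\|^2=\la\delbar^*\delbar\alpha,\alpha\ra=0$, hence $\delbar\alpha=0$; feeding this back into $\delbar\psi=0$ yields $\delbar\beta=0$. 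This establishes the Bott-Chern equality.

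Finally, the Aeppli statement follows by Hodge duality rather than by repeating the argument. Since $*$ interchanges $\H^{2,2}_{BC}$ and $\H^{2,2}_A$ by \eqref{bc-a-duality}, and since $*(\omega\wedge\alpha+\beta)=-\omega\wedge\alpha+\beta$, a form $\psi=\omega\wedge\alpha+\beta$ lies in $\H^{2,2}_A$ iff $*\psi=\omega\wedge(-\alpha)+\beta\in\H^{2,2}_{BC}$; applying the Bott-Chern characterisation to $*\psi$ replaces $\alpha$ by $-\alpha$ and leaves $\beta$ unchanged, turning $\omega\wedge\del\alpha+\del\beta=0$ into $\omega\wedge\del\alpha-\del\beta=0$ while keeping $\delbar\alpha=\delbar\beta=0$. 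I expect the only delicate point in the whole argument to be the vanishing of $\delbar\alpha$: it is exactly where one must go beyond the purely algebraic primitivity bookkeeping of the preceding lemmas and supply an analytic input, namely the K\"ahler identity together with the compactness of $M$.
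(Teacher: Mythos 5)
Your proof is correct, and while it shares the paper's overall skeleton (reduce via Theorem \ref{cor bc decomp}, write out the harmonicity conditions for $\psi=\omega\wedge\alpha+\beta$, then upgrade $\delbar\psi=0$ to the two separate equations $\delbar\alpha=\delbar\beta=0$), the pivotal step is executed by a genuinely different mechanism. The paper takes from Theorem \ref{thm-bc} the condition $\del\delbar\beta=0$ and applies the identity $[\Lambda,\delbar]=-i\del^*$ to $\beta$: it computes the $L^2$ product $\la L\delbar\alpha,\delbar\beta\ra=i\la\alpha,*\del\delbar\beta\ra=0$, and since $\delbar\psi=0$ forces $L\delbar\alpha=-\delbar\beta$, the two forms are simultaneously orthogonal and opposite, hence both vanish, with Lefschetz injectivity then giving $\delbar\alpha=0$. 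You instead take from Theorem \ref{thm-bc} the primitivity of $\delbar\alpha$ and of $\del\delbar\alpha$ and use the conjugate identity $[\Lambda,\del]=i\delbar^*$ (which is legitimate: it follows from the identity the paper quotes by conjugation, since $\Lambda$ is a real operator); this gives the pointwise vanishing $\delbar^*\delbar\alpha=0$, integration by parts on the compact $M$ yields $\delbar\alpha=0$ directly, and $\delbar\beta=-L\delbar\alpha=0$ comes for free. The two arguments consume complementary parts of Theorem \ref{thm-bc} — the paper never needs the primitivity of $\delbar\alpha,\del\delbar\alpha$, and you never need $\del\delbar\beta=0$ — and yours is arguably a little cleaner, as it avoids the Hodge-star bookkeeping ($*\beta=\beta$, $\delbar^*\!*\delbar\beta=*\del\delbar\beta$) and the orthogonality trick. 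Finally, where the paper disposes of the Aeppli case with "a similar argument" (i.e., rerunning the computation), you obtain it at no cost from the Bott-Chern case via the duality \eqref{bc-a-duality} together with $*(\omega\wedge\alpha+\beta)=-\omega\wedge\alpha+\beta$; this is a genuine economy, and it is valid since $*\psi$ again lies in $\ker L^2$, so your Bott-Chern characterisation applies to it verbatim with $\alpha$ replaced by $-\alpha$.
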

\begin{proof}
Let us prove the Bott-Chern case. The Aeppli case is proved by a similar argument. By Theorem \ref{cor bc decomp}, we know
\[
\mathcal{H}^{2,2}_{BC} = \mathbb{C}\, \omega^2 \oplus \left( \mathcal{H}_{BC}^{2,2} \cap \ker L^{2} \right),
\]
therefore we want to prove that
\[
\mathcal{H}_{BC}^{2,2} \cap \ker L^{2}=\left\{\omega\wedge\alpha+\beta\,|\,\alpha\in P^{1,1},\,\beta\in P^{2,2},\,\omega\wedge\del\alpha+\del\beta=\omega\wedge\delbar\alpha=\delbar\beta=0\right\}.
\]
The inclusion $\supseteq$ is straightforward. Let us then take a form $\psi\in\mathcal{H}_{BC}^{2,2} \cap \ker L^{2}$. Its primitive decomposition is
\[
\psi=\omega\wedge\alpha+\beta,
\]
with $\alpha\in P^{1,1}$ and $\beta\in P^{2,2}$. Since $\psi$ is Bott-Chern harmonic, we know $\del\psi=\delbar\psi=0$, that is
\[
\omega\wedge\del\alpha+\del\beta=\omega\wedge\delbar\alpha+\delbar\beta=0.
\]
Moreover, by Theorem \ref{thm-bc}, we have
\[
\del\delbar\beta=0.
\]
Let us compute the pointwise inner product between $\omega\wedge\delbar\alpha$ and $\delbar\beta$. By the almost K\"ahler identities of \cite[Proposition 3.1]{cirici-wilson-2}, in particular $[\Lambda,\delbar]=-i\del^*$, we have
\begin{align*}
\langle L\delbar\alpha,\delbar\beta\rangle=\langle \delbar\alpha,\Lambda\delbar\beta\rangle=-i\langle \delbar\alpha,\del^*\beta\rangle=i\langle \delbar\alpha,*\delbar\beta\rangle.
\end{align*}
Now we integrate this pointwise inner product on the manifold to get the usual $L^2$ product between forms, obtaining
\begin{align*}
\la L\delbar\alpha,\delbar\beta\ra=i\la\delbar\alpha,*\delbar\beta\ra=i\la \alpha, \delbar^**\delbar\beta\ra=i\la\alpha,*\del\delbar\beta\ra=0.
\end{align*}
Since $L\delbar\alpha=-\delbar\beta$ and they are $L^2$ orthogonal, they must both be equal to zero. Now, by the Lefschetz isomorphism, $L\delbar\alpha=0$ if and only if $\delbar\alpha=0$. This ends the proof.
\end{proof}

\section{Primitive decomposition of Dolbeault Harmonic $(k,k)$-forms}

The next theorem yields similar conclusions for the Dolbeault case to the ones in the Bott-Chern and Aeppli case.

\begin{theorem}\label{thm dolbeault}
Let $(M,J,\omega)$ be a compact almost K\"ahler manifold of real dimension $2n$. Let $\psi$ denote a $(k,k)$-form, for some $k \in \mathbb{N}$. We can write
$$\psi = \sum_{m=0}^{\min(k,n-k)} \omega^{k-m}\wedge \alpha^{m,m},  $$
with $\alpha^{m,m} \in P^{m,m}$. If $\psi$ is Dolbeault harmonic then $\alpha^{0,0} \in \mathbb{C}$ is a complex constant and $\del\alpha^{1,1},\delbar\alpha^{1,1}$ are primitive.
\begin{proof}
We start by considering the case when $2k \leq n$, using a similar argument to the one used in \cite[Theorem 3.4]{cattaneo-tardini-tomassini}.

Note that $\psi$ is Dolbeault harmonic if and only it satisfies $\overline \partial \psi = 0$ and $ \partial * \psi = 0$. Since $\omega$ is almost K\"ahler, when we write these conditions out using the primitive decomposition of $\psi$ we get
\begin{align}
    0&=\delbar\psi=\sum_{m=0}^k \omega^{k-m}\wedge \overline \partial \alpha^{m,m},\label{eq Dolbeault del-bar}\\
0&=\del*\psi=\sum_{m=0}^k (-1)^m \frac{(k-m)!}{(n-k-m)!}\omega^{n-k-m}\wedge \partial \alpha^{m,m}.\label{eq Dolbeault del-star}
\end{align}

Then, by taking the wedge product of $\omega^{n-k-1}$ with equation \eqref{eq Dolbeault del-bar} we find that
\begin{align}
    \sum_{m=0}^k \omega^{n-m-1}\wedge \overline \partial \alpha^{m,m} =\omega^{n-1}\wedge \overline\partial\alpha^{0,0} + \omega^{n-2}\wedge \overline\partial\alpha^{1,1} = 0, \label{eq Dolbeault 1}
\end{align}
since $\omega^{n-2m+1} \wedge \alpha^{m,m}=0$ for all $m \in \mathbb{N}$. Similarly, by taking the wedge product of $\omega^{k-1}$ with equation \eqref{eq Dolbeault del-star} we find
\begin{align*}
    &\sum_{m=0}^k (-1)^m \frac{(k-m)!}{(n-k-m)!}\omega^{n-m-1}\wedge \partial \alpha^{m,m}=\\
    &=\frac{k!}{(n-k)!}\omega^{n-1}\wedge \partial\alpha^{0,0} - \frac{(k-1)!}{(n-k-1)!}\omega^{n-2}\wedge \partial\alpha^{1,1} = 0.
\end{align*}
Multiplying this by $\frac{(n-k-1)!}{(k-1)!}$, we have
\begin{equation}
    \frac{k}{n-k}\omega^{n-1}\wedge \partial\alpha^{0,0} - \omega^{n-2}\wedge \partial\alpha^{1,1}=0. \label{eq Dolbeault 2}
\end{equation}
We can then sum the equations \eqref{eq Dolbeault 1} and \eqref{eq Dolbeault 2}, to get
\begin{equation}
    \omega^{n-1}\wedge\left(\overline\partial + \frac{k}{n-k}\partial\right) \alpha^{0,0} + \omega^{n-2}\wedge\left(\overline \partial - \partial\right) \alpha^{1,1} = 0.\label{eq Dolbeault combined 1}
\end{equation}

Now, by making use of the operator
$$d^c = i(\mu - \partial + \overline \partial - \overline\mu) $$
and the fact that $\mu = \overline \mu = 0$ when acting on an $(n-1,n-1)$-form, we can rewrite equation \eqref{eq Dolbeault combined 1} as 
\begin{equation*}
      \omega^{n-1} \wedge\left(\overline\partial + \frac{k}{n-k}\partial\right) \alpha^{0,0} = id^c \left(\omega^{n-2}\wedge \alpha^{1,1}\right).
\end{equation*}
Applying $-id^c$ the right hand side vanishes and we are left with
\begin{align*}
    \omega^{n-1}\wedge(\mu - \partial + \overline \partial - \overline\mu)\left(\overline\partial + \frac{k}{n-k}\partial\right) \alpha^{0,0} = \left(\frac{k}{n-k}+1\right)\omega^{n-1}\wedge \partial \overline \partial \alpha^{0,0} = 0.
\end{align*}
In particular, we have $\omega^{n-1} \wedge \partial \overline\partial \alpha^{0,0}=0$, which implies that $\alpha^{0,0}$ is constant by Lemma \ref{lemma elliptic}. Now, looking at \eqref{eq Dolbeault combined 1}, we deduce that $\del\alpha^{1,1}$ and $\delbar\alpha^{1,1}$ are primitive.

The result in the case when $2k \geq n$ follows simply from the first case by Serre duality. Namely, we have $\mathcal{H}_{\overline \partial}^{k,k} = *\overline{\mathcal{H}_{\overline \partial}^{n-k,n-k}}$. Therefore, any element of $\mathcal{H}_{\overline \partial}^{k,k}$ with $2k \geq n$ can be written as
$$*\overline{\psi} = \sum_{m=0}^{n-k}(-1)^m\frac{(n-k-m)!}{(k-m)!}\omega^{k-m}\wedge \overline{\alpha^{m,m}}  $$
for some $\psi \in \mathcal{H}_{\overline\partial}^{n-k,n-k}$, and since $2(n-k) \leq n$ we conclude that $\alpha^{0,0}$ is constant and $\del\alpha^{1,1}$ and $\delbar\alpha^{1,1}$ are primitive.
\end{proof}
\end{theorem}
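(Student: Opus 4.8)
The plan is to adapt the strategy used for the Bott--Chern and Aeppli Laplacians, but to work directly with the first-order conditions cutting out $\delbar$-harmonic forms. First I would reduce to the range $2k\le n$. By Serre duality $\mathcal{H}^{k,k}_\delbar=*\,\overline{\mathcal{H}^{n-k,n-k}_\delbar}$, so any $\psi\in\mathcal{H}^{k,k}_\delbar$ with $2k\ge n$ has the form $*\bar\varphi$ for some $\varphi\in\mathcal{H}^{n-k,n-k}_\delbar$ with $2(n-k)\le n$; formula \eqref{*-primitive} then lets me read off the primitive components of $*\bar\varphi$ from those of $\varphi$, transporting the conclusion. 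Thus the substance of the argument lies in the case $2k\le n$.

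In that case I would invoke the characterisation \eqref{eq-char-harm-forms}, namely that $\psi$ is $\delbar$-harmonic iff $\delbar\psi=0$ and $\del*\psi=0$. Writing both equations out via the primitive decomposition of $\psi$ and of $*\psi$ (the latter through \eqref{*-primitive}), and using $d\omega=0$ to commute $\del,\delbar$ past the powers of $\omega$, I obtain
\[
\sum_{m}\omega^{k-m}\wedge\delbar\alpha^{m,m}=0,\qquad \sum_{m}(-1)^m\frac{(k-m)!}{(n-k-m)!}\,\omega^{n-k-m}\wedge\del\alpha^{m,m}=0.
\]
Exactly as in Lemmas \ref{lemma-del} and \ref{lemma-delstar}, wedging the first identity with $\omega^{n-k-1}$ and the second with $\omega^{k-1}$ collapses each sum, thanks to the primitivity relation $\omega^{n-2m+1}\wedge\alpha^{m,m}=0$, down to its $m=0$ and $m=1$ terms. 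This leaves two relations coupling $\omega^{n-1}\wedge\delbar\alpha^{0,0}$ with $\omega^{n-2}\wedge\delbar\alpha^{1,1}$, and $\omega^{n-1}\wedge\del\alpha^{0,0}$ with $\omega^{n-2}\wedge\del\alpha^{1,1}$.

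The main obstacle, and where this departs from the Bott--Chern computation, is that I have no ready second-order equation isolating $\del\delbar\alpha^{0,0}$: the two relations genuinely entangle $\alpha^{0,0}$ with $\alpha^{1,1}$. To decouple them I would take the linear combination that makes the $\alpha^{1,1}$-contribution appear as $\omega^{n-2}\wedge(\delbar-\del)\alpha^{1,1}$. Since $d\omega=0$ and $\mu,\c\mu$ annihilate $(n-1,n-1)$-forms, one has $d^c=i(\delbar-\del)$ on such forms, so this contribution equals, up to a nonzero constant, $d^c\big(\omega^{n-2}\wedge\alpha^{1,1}\big)$. Applying $d^c$ once more and using $(d^c)^2=0$ kills it, while the $\alpha^{0,0}$-term survives and, after simplifying with $\del\delbar+\delbar\del=0$ on functions, reduces to $\big(\tfrac{k}{n-k}+1\big)\,\omega^{n-1}\wedge\del\delbar\alpha^{0,0}$. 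As this coefficient is nonzero, I conclude $\omega^{n-1}\wedge\del\delbar\alpha^{0,0}=0$, whence $\alpha^{0,0}$ is a constant by Lemma \ref{lemma elliptic}. Substituting this back annihilates the $\alpha^{0,0}$-term in the combined relation and leaves $\omega^{n-2}\wedge(\delbar-\del)\alpha^{1,1}=0$; since $\delbar\alpha^{1,1}$ is of type $(1,2)$ and $\del\alpha^{1,1}$ of type $(2,1)$, the two summands lie in different bidegrees and must vanish separately, so both $\del\alpha^{1,1}$ and $\delbar\alpha^{1,1}$ are primitive. The delicate points are the bidegree bookkeeping in the $d^c$-computation and verifying that the surviving coefficient does not degenerate.
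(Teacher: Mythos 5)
Your proposal is correct and follows essentially the same route as the paper's own proof: reduction to $2k\le n$ by Serre duality, collapsing the two harmonicity equations by wedging with $\omega^{n-k-1}$ and $\omega^{k-1}$, decoupling $\alpha^{0,0}$ from $\alpha^{1,1}$ via the $d^c$ trick together with $(d^c)^2=0$ and $\del\delbar+\delbar\del=0$ on functions, invoking Lemma \ref{lemma elliptic}, and finally separating $\omega^{n-2}\wedge(\delbar-\del)\alpha^{1,1}=0$ by bidegree. The only cosmetic difference is that you make the bidegree-separation step at the end explicit, which the paper leaves implicit.
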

\begin{remark}
Note that the same statement of Theorem \ref{thm dolbeault} also holds for $\Delta_\del$-harmonic $(k,k)$-forms. The proof of this is equivalent to the one above up to conjugation.
\end{remark}

The above result allows the primitive decomposition of $(k,k)$-forms to descend partially to Dolbeault harmonic $(k,k)$-forms in the following way, using the same proof as Theorem \ref{cor bc decomp}.
\begin{theorem}\label{cor Dolbeault partial decomp}
Let $(M,J,\omega)$ be a compact almost K\"ahler manifold of real dimension $2n$. For any $k\in \mathbb{N}$ we have
$$\mathcal{H}^{k,k}_{\overline \partial} = \mathbb{C}\, \omega^k \oplus \left( \mathcal{H}_{\overline\partial}^{k,k} \cap \ker L^{n-k} \right)$$
and
$$\mathcal{H}^{k,k}_{\partial} = \mathbb{C}\, \omega^k \oplus \left( \mathcal{H}_{\partial}^{k,k} \cap \ker L^{n-k} \right).$$
\end{theorem}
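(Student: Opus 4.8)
The plan is to follow the proof of Theorem~\ref{cor bc decomp} essentially verbatim, substituting the Dolbeault input Theorem~\ref{thm dolbeault} for the Bott--Chern/Aeppli input Theorem~\ref{thm-bc}. First I would record that $\omega^k$ is itself $\delbar$-harmonic, so that $\C\,\omega^k\subseteq\H^{k,k}_\delbar$. Indeed, $d\omega=0$ forces every bidegree component of $d\omega$ to vanish separately, hence $\delbar\omega=0$ and so $\delbar\omega^k=0$; moreover by \eqref{*-primitive} (applied to the primitive $0$-form $1$) the form $*\omega^k$ is a nonzero constant multiple of $\omega^{n-k}$, which is again $d$-closed, giving $\del*\omega^k=0$. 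By the characterisation \eqref{eq-char-harm-forms} this yields $\omega^k\in\H^{k,k}_\delbar$, and of course $\H^{k,k}_\delbar\cap\ker L^{n-k}\subseteq\H^{k,k}_\delbar$ as well, so the inclusion $\supseteq$ is immediate.

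For the inclusion $\subseteq$, I would take $\psi\in\H^{k,k}_\delbar$ and write its primitive decomposition $\psi=\sum_{m=0}^{\min(k,n-k)}\omega^{k-m}\wedge\alpha^{m,m}$. Theorem~\ref{thm dolbeault} tells us that the leading coefficient $\alpha^{0,0}$ is a complex constant. The key computation is then
\[
L^{n-k}\psi=\sum_{m}\omega^{n-m}\wedge\alpha^{m,m}=\alpha^{0,0}\,\omega^n,
\]
where primitivity of $\alpha^{m,m}$ (the relation $\omega^{n-2m+1}\wedge\alpha^{m,m}=0$) kills the terms with $m\ge1$ since $n-m\ge n-2m+1$. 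Consequently $\psi-\alpha^{0,0}\omega^k$ is a difference of $\delbar$-harmonic forms lying in $\ker L^{n-k}$, which exhibits $\psi$ as an element of $\C\,\omega^k+(\H^{k,k}_\delbar\cap\ker L^{n-k})$.

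To see that the sum is direct, I would note that if $c\,\omega^k\in\ker L^{n-k}$ then $c\,\omega^n=L^{n-k}(c\,\omega^k)=0$, forcing $c=0$ because $\omega^n$ is a nowhere-vanishing volume form; hence $\C\,\omega^k\cap\ker L^{n-k}=0$. The statement for $\H^{k,k}_\partial$ follows by the identical argument, invoking the remark after Theorem~\ref{thm dolbeault} that the same conclusion holds for $\Delta_\del$-harmonic forms (equivalently, passing through complex conjugation).

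Since Theorem~\ref{thm dolbeault} already carries all the analytic weight—the constancy of $\alpha^{0,0}$ ultimately resting on Lemma~\ref{lemma elliptic} and the maximum principle—I do not expect any substantial obstacle at this stage. The only points needing care are the harmonicity of $\omega^k$ and the primitivity bookkeeping $\omega^{n-m}\wedge\alpha^{m,m}=0$ for $m\ge1$, both of which are routine.
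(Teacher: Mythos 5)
Your proposal is correct and follows essentially the same route as the paper, which proves this theorem by repeating the proof of Theorem \ref{cor bc decomp} with Theorem \ref{thm dolbeault} (and its conjugate version for $\Delta_\del$) supplying the constancy of $\alpha^{0,0}$. Your write-up is in fact slightly more complete than the paper's, since you spell out the $\delbar$-harmonicity of $\omega^k$ and the directness of the sum, which the paper dismisses as trivial.
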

\begin{remark}
When we consider the case of Theorem \ref{cor Dolbeault partial decomp} with $k = 1$, we recover the results of Cattaneo, Tardini and Tomassini, \cite[Theorem 3.4 and Corollary 3.5]{cattaneo-tardini-tomassini}. Namely, we have a complete decomposition into primitive harmonic forms
$$\mathcal{H}_{\overline\partial}^{1,1} = \mathbb{C}\, \omega \oplus \left(\mathcal{H}_{\overline\partial}^{1,1}\cap P^{1,1}\right), $$
$$\mathcal{H}_{\partial}^{1,1} = \mathbb{C}\, \omega \oplus \left(\mathcal{H}_{\partial}^{1,1}\cap P^{1,1}\right). $$
Corollary \ref{cor counterexample} will show that a complete decomposition into primitive harmonic forms does not hold in higher bidegrees $(k,k)$ with $k\ge 2$.
\end{remark}

If we now restrict to real dimension 8, we obtain the following.
\begin{corollary}\label{cor dol 8}
Let $(M,J,\omega)$ be a compact almost K\"ahler manifold of real dimension $8$. We have
\[
\H^{2,2}_{\delbar}=\C\,\omega^2\oplus\left\{\omega\wedge\alpha+\beta\,|\,\alpha\in P^{1,1},\,\beta\in P^{2,2},\,\omega\wedge\delbar\alpha+\delbar\beta=\omega\wedge\del\alpha-\del\beta=0\right\},
\]
and
\[
\H^{2,2}_{\del}=\C\,\omega^2\oplus\left\{\omega\wedge\alpha+\beta\,|\,\alpha\in P^{1,1},\,\beta\in P^{2,2},\,\omega\wedge\del\alpha+\del\beta=\omega\wedge\delbar\alpha-\delbar\beta=0\right\}.
\]
\end{corollary}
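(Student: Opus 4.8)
The plan is to follow the template of the proof of Corollary \ref{cor bc 8}, but starting from the Dolbeault splitting of Theorem \ref{cor Dolbeault partial decomp} in place of Theorem \ref{cor bc decomp}. With $n=4$ and $k=2$, that theorem gives
\[
\H^{2,2}_{\delbar}=\C\,\omega^2\oplus\left(\H^{2,2}_{\delbar}\cap\ker L^2\right),
\]
so it suffices to identify $\H^{2,2}_{\delbar}\cap\ker L^2$ with the displayed set of forms $\omega\wedge\alpha+\beta$. First I would note that any $\psi\in\ker L^2$ has primitive decomposition $\psi=\omega\wedge\alpha+\beta$ with $\alpha\in P^{1,1}$ and $\beta\in P^{2,2}$: writing the general decomposition $\psi=c\,\omega^2+\omega\wedge\alpha+\beta$ and using the dimension-$8$ primitivity relations $\omega^3\wedge\alpha=\omega\wedge\beta=0$, one finds $L^2\psi=c\,\omega^4$, so $\psi\in\ker L^2$ forces $c=0$.

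Next I would translate harmonicity into the two equations. By \eqref{eq-char-harm-forms}, $\psi\in\H^{2,2}_{\delbar}$ is equivalent to $\delbar\psi=0$ together with $\del*\psi=0$. Using $d\omega=0$, hence $\del\omega=\delbar\omega=0$, the Leibniz rule gives directly
\[
\delbar\psi=\omega\wedge\delbar\alpha+\delbar\beta,
\]
which is the first defining equation $\omega\wedge\delbar\alpha+\delbar\beta=0$. The only genuine computation is the Hodge star: applying \eqref{*-primitive} to $L\alpha$ (with $\alpha$ primitive of degree $2$) and to $\beta$ (primitive of degree $4$), and using that $J$ acts as the identity on $(1,1)$- and $(2,2)$-forms, yields
\[
*\psi=-\,\omega\wedge\alpha+\beta.
\]
Then $\del*\psi=-\omega\wedge\del\alpha+\del\beta$, so $\del*\psi=0$ is exactly the second defining equation $\omega\wedge\del\alpha-\del\beta=0$. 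Since these two equations are simultaneously necessary and sufficient for $\delbar$-harmonicity, both inclusions follow at once, proving the $\delbar$ case. The $\del$ case is identical after replacing the characterisation by $\del\psi=0$ and $\delbar*\psi=0$, which produce $\omega\wedge\del\alpha+\del\beta=0$ and $\omega\wedge\delbar\alpha-\delbar\beta=0$.

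The one point to get right is the Hodge-star identity $*\psi=-\omega\wedge\alpha+\beta$ and its attendant signs, since everything downstream depends on it; I expect no serious obstacle beyond this. In contrast to the Bott-Chern situation of Corollary \ref{cor bc 8}, there is no second-order constraint $\del\delbar\beta=0$ available here, so no integration-by-parts or $L^2$-orthogonality step is needed, and correspondingly the defining conditions remain in the mixed form $\omega\wedge\delbar\alpha+\delbar\beta=0$ rather than splitting into $\delbar\alpha=\delbar\beta=0$. As a consistency check against Theorem \ref{thm dolbeault}, the first equation together with $\omega\wedge\beta=0$ forces $\omega^2\wedge\delbar\alpha=-\delbar(\omega\wedge\beta)=0$, recovering the expected primitivity of $\delbar\alpha$.
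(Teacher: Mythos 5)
Your proof is correct and follows exactly the paper's own route: Theorem \ref{cor Dolbeault partial decomp}, the characterisation \eqref{eq-char-harm-forms} of $\delbar$- and $\del$-harmonic forms, and formula \eqref{*-primitive} giving $*\psi=-\omega\wedge\alpha+\beta$ (the paper's proof is just a one-line citation of these same three ingredients, which you have written out in full with the correct signs). Nothing to add.
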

\begin{proof}
The result follows immediately from Theorem \ref{cor Dolbeault partial decomp}, from the characterisation \eqref{eq-char-harm-forms} of Dolbeault and $\del$-harmonic forms and from formula \eqref{*-primitive}.
\end{proof}

From Corollaries \ref{cor bc 8} and \ref{cor dol 8}, we deduce the following inclusions of the spaces of harmonic forms in dimension 8.
\begin{corollary}\label{cor incl}
Let $(M,J,\omega)$ be a compact almost K\"ahler manifold of real dimension $8$. We have
\[
\H^{2,2}_{BC}\subseteq\H^{2,2}_{\del},\ \ \ \ \ \ \ \H^{2,2}_{A}\subseteq\H^{2,2}_{\delbar}.
\]
\end{corollary}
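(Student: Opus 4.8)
The plan is to read off both inclusions directly from the explicit descriptions of the four harmonic spaces furnished by Corollary \ref{cor bc 8} and Corollary \ref{cor dol 8}, comparing the defining differential conditions term by term. Since $\C\,\omega^2$ occurs as a common direct summand in all four decompositions, it suffices to compare the nontrivial (primitive) summands.

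First I would treat the inclusion $\H^{2,2}_{BC}\subseteq\H^{2,2}_{\del}$. I take $\psi=\omega\wedge\alpha+\beta$ in the nontrivial summand of $\H^{2,2}_{BC}$, so that $\alpha\in P^{1,1}$, $\beta\in P^{2,2}$ and, by Corollary \ref{cor bc 8},
\[
\omega\wedge\del\alpha+\del\beta=0,\qquad \delbar\alpha=0,\qquad \delbar\beta=0.
\]
By Corollary \ref{cor dol 8}, membership in $\H^{2,2}_{\del}$ is characterised by the two equations $\omega\wedge\del\alpha+\del\beta=0$ and $\omega\wedge\delbar\alpha-\delbar\beta=0$. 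The first is literally one of the Bott-Chern conditions, while the second holds at once, since $\delbar\alpha=0$ and $\delbar\beta=0$ make both summands $\omega\wedge\delbar\alpha$ and $\delbar\beta$ vanish separately. Hence $\psi\in\H^{2,2}_{\del}$.

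The inclusion $\H^{2,2}_{A}\subseteq\H^{2,2}_{\delbar}$ is handled identically. An element $\omega\wedge\alpha+\beta$ in the nontrivial summand of $\H^{2,2}_{A}$ satisfies $\omega\wedge\del\alpha-\del\beta=0$, $\delbar\alpha=0$, $\delbar\beta=0$ by Corollary \ref{cor bc 8}, and the $\delbar$-harmonicity conditions $\omega\wedge\del\alpha-\del\beta=0$ and $\omega\wedge\delbar\alpha+\delbar\beta=0$ from Corollary \ref{cor dol 8} then follow, the first being the same equation and the second being a consequence of $\delbar\alpha=\delbar\beta=0$.

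There is essentially no obstacle at this stage: all the substantive work has already been carried out in establishing Corollaries \ref{cor bc 8} and \ref{cor dol 8}, in particular the $L^2$-orthogonality argument that upgrades the single equation $\omega\wedge\delbar\alpha\pm\delbar\beta=0$ to the stronger pair $\delbar\alpha=0$, $\delbar\beta=0$ in the Bott-Chern and Aeppli cases. Given those descriptions, the corollary is simply the observation that these two separate vanishing conditions are strictly stronger than the corresponding combined equation appearing in the Dolbeault characterisations, so the only point requiring care is to match the signs correctly in each of the two cases.
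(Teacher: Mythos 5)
Your proposal is correct and takes essentially the same route as the paper: the paper states Corollary \ref{cor incl} as an immediate consequence of Corollaries \ref{cor bc 8} and \ref{cor dol 8}, with no further argument, and your term-by-term comparison (the shared $\C\,\omega^2$ summand, the coincidence of the $\del$-equations, and the observation that $\delbar\alpha=\delbar\beta=0$ trivially forces the combined $\delbar$-equation) is exactly that deduction made explicit, with the signs matched correctly in both cases.
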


\section{Examples}

In this section we present two 8-dimensional examples of nilmanifolds and study their harmonic $(2,2)$-forms.

\begin{example}\label{ex torus}
We consider a similar construction to the one in \cite[Example 4.9]{cattaneo-tardini-tomassini}. Let $\mathbb{T}^8=\Z^8\backslash \R^8$ be the $8$-dimensional torus with real coordinates $(x^1,y^1,x^2,y^2,x^3,y^3,x^4,y^4)$ on $\R^8$. Let $g=g(x^4,y^4)$ be a non constant function on $\mathbb{T}^8$. We define an almost complex structure $J$ by setting 
\begin{align*}
&\phi^1=e^gdx^1+ie^{-g}dy^1,\\
&\phi^2=dx^2+idy^2,\\
&\phi^3=dx^3+idy^3,\\
&\phi^4=dx^4+idy^4.
\end{align*}
to be a global coframe of $(1,0)$-forms.
Denote by $V_1,V_2,V_3,V_4$ the global frame of vector fields dual to $\phi^1,\phi^2,\phi^3,\phi^4$. Then, the structure equations are
\begin{align*}
&d\phi^1=V_4(g)\phi^{4\c1}-\c{V_4}(g)\phi^{\c1\c4},\\
&d\phi^2=0,\\
&d\phi^3=0,\\
&d\phi^4=0.
\end{align*}
We endow $(\mathbb{T}^8,J)$ with the almost K\"ahler metric given by the fundamental form
\[
\omega=i(\phi^{1\c1}+\phi^{2\c2}+\phi^{3\c3}+\phi^{4\c4}).
\]
We consider the volume form 
\[
\frac{\omega^4}{4!}=\phi^{1\c12\c23\c34\c4}=\phi^{1234\c1\c2\c3\c4}.
\]

We want to show that the inclusion $\H^{2,2}_{BC}\subseteq\H^{2,2}_{\del}$ of Corollary \ref{cor incl} is strict. Let us consider the form $\phi^{12\c2\c3}$. We compute
\begin{align*}
&\del\phi^{12\c2\c3}=0,\\
&\delbar*\phi^{12\c2\c3}=\delbar\phi^{14\c3\c4}=0,\\
&\delbar\phi^{12\c2\c3}=V_4(g)\phi^{4\c12\c2\c3}\ne0.
\end{align*}
Therefore, $\phi^{12\c2\c3}\in\H^{2,2}_{\del}\setminus\H^{2,2}_{BC}$, proving our claim. We note that the same holds for the form $\phi^{13\c2\c3}$. By duality (see \eqref{eq-char-harm-forms}), also note that $*\psi\in\H^{2,2}_{\delbar}\setminus\H^{2,2}_{A}$.

We also want to show that in general the primitive decomposition of $(2,2)$-forms does not descend to the spaces of Bott-Chern, Aeppli, Dolbeault and $\del$-harmonic forms. Namely, we want to find a $(2,2)$-form
\[
\psi=\omega\wedge\alpha+\beta,
\]
where $\alpha\in P^{1,1}$ and $\beta\in P^{2,2}$, such that $\alpha$ and $\beta$ are not Bott-Chern and $\del$-harmonic. Considering $\c\psi$, the same can then be shown for the cases of Aeppli and Dolbeault. Let us consider the form $\psi=2\phi^{2\c14\c4}$. Its primitive decomposition is
\[
2\phi^{2\c14\c4}=(\phi^{2\c14\c4}+\phi^{2\c13\c3})+(\phi^{2\c14\c4}-\phi^{2\c13\c3}),
\]
where
\begin{align*}
&\phi^{2\c14\c4}+\phi^{2\c13\c3}=-i\omega\wedge\phi^{2\c1}\in L(P^{1,1}),\\
&\phi^{2\c14\c4}-\phi^{2\c13\c3}\in P^{2,2}.
\end{align*}
Set $\beta=\phi^{2\c14\c4}-\phi^{2\c13\c3}$ and $\alpha=-i\phi^{2\c1}$. Then we have
\begin{align*}
&\delbar\alpha=0,\\
&\delbar\beta=0,\\
&\omega\wedge\del\alpha+\del\beta=0,\\
&\del\alpha=-i\c{V_4}(g)\phi^{21\c4}\ne0,\\
&\del\beta=-\c{V_4}(g)\phi^{21\c43\c3}\ne0,
\end{align*}
therefore $\psi\in\H^{2,2}_{BC}\cap\H^{2,2}_{\del}$, while $\alpha\notin\H^{1,1}_{BC}\cup\H^{1,1}_{\del}$ and $\beta\notin\H^{2,2}_{BC}\cup\H^{2,2}_{\del}$. Note that this also shows that the two results of Corollary \ref{cor bc 8} cannot be strengthened by asking that $\omega\wedge\del\alpha=\del\beta=0$, instead of $\omega\wedge\del\alpha+ \del\beta=0$ or $\omega\wedge\del\alpha- \del\beta=0$.
\end{example}

Summing up the results from the above example, we state the following corollary.
\begin{corollary}\label{cor counterexample}
There exists a compact almost K\"ahler manifold $(M,J,\omega)$ of real dimension $8$ such that
\[
\H^{2,2}_{BC}\not\supseteq\H^{2,2}_{\del},\ \ \ \ \ \ \ \H^{2,2}_{A}\not\supseteq\H^{2,2}_{\delbar}
\]
and
\[
\H^{2,2}_{D}\not\subseteq \C\,\omega^2\oplus L\left(P^{1,1}\cap\H^{1,1}_{D}\right)\oplus \left(P^{2,2}\cap \H^{2,2}_{D}\right),
\]
where $D\in\{BC,A,\delbar,\del\}$.
\end{corollary}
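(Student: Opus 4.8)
The plan is to read both assertions off the explicit torus $\mathbb{T}^8$ constructed in Example \ref{ex torus}; this is the witnessing manifold $(M,J,\omega)$. Everything rests on the structure equations $d\phi^1=V_4(g)\phi^{4\c1}-\c{V_4}(g)\phi^{\c1\c4}$, $d\phi^2=d\phi^3=d\phi^4=0$, together with the harmonicity characterisations \eqref{eq-char-harm-forms}.

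For the non-inclusions, I would exhibit the monomial $\phi^{12\c2\c3}$. A direct computation with the structure equations gives $\del\phi^{12\c2\c3}=0$ and $\delbar*\phi^{12\c2\c3}=\delbar\phi^{14\c3\c4}=0$, so $\phi^{12\c2\c3}\in\H^{2,2}_{\del}$, whereas $\delbar\phi^{12\c2\c3}=V_4(g)\phi^{4\c12\c2\c3}\ne0$ shows $\phi^{12\c2\c3}\notin\H^{2,2}_{BC}$. This produces an element of $\H^{2,2}_{\del}\setminus\H^{2,2}_{BC}$, hence $\H^{2,2}_{BC}\not\supseteq\H^{2,2}_{\del}$; applying the Hodge $*$ operator and the duality recorded in \eqref{eq-char-harm-forms} (between the Bott-Chern and Aeppli conditions, and between the $\del$ and $\delbar$ conditions) transports this to a form in $\H^{2,2}_{\delbar}\setminus\H^{2,2}_{A}$, giving $\H^{2,2}_{A}\not\supseteq\H^{2,2}_{\delbar}$.

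For the failure of the primitive decomposition, I would use $\psi=2\phi^{2\c14\c4}$, whose unique primitive decomposition in dimension $8$ is $\psi=\omega\wedge\alpha+\beta$ with $\alpha=-i\phi^{2\c1}\in P^{1,1}$ and $\beta=\phi^{2\c14\c4}-\phi^{2\c13\c3}\in P^{2,2}$. The computations of Example \ref{ex torus} give $\psi\in\H^{2,2}_{BC}\cap\H^{2,2}_{\del}$, while $\del\alpha=-i\c{V_4}(g)\phi^{21\c4}\ne0$ and $\del\beta=-\c{V_4}(g)\phi^{21\c43\c3}\ne0$ show that neither $\alpha$ nor $\beta$ is $BC$- or $\del$-harmonic. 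By uniqueness of the primitive decomposition, if $\psi$ belonged to $\C\,\omega^2\oplus L(P^{1,1}\cap\H^{1,1}_D)\oplus(P^{2,2}\cap\H^{2,2}_D)$ then necessarily $\alpha\in\H^{1,1}_D$ and $\beta\in\H^{2,2}_D$; this is false for $D\in\{BC,\del\}$, and replacing $\psi$ by $\c\psi$ and using the conjugate characterisation settles $D\in\{A,\delbar\}$.

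The conceptual content is light: the corollary is a bookkeeping consequence of Example \ref{ex torus} together with the uniqueness of the primitive decomposition of a $(2,2)$-form. The only genuine obstacle is computational, namely verifying the mixed harmonicity conditions $\delbar*\psi=0$ and $\del\delbar*\psi=0$ for the chosen monomials, which requires expanding each Hodge $*$ via \eqref{*-primitive} and differentiating with the single nonzero structure equation; this is where care is needed, though it remains routine.
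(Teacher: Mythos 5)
Your proposal is correct and follows essentially the same route as the paper: there the corollary is obtained precisely as a summary of Example \ref{ex torus}, using $\phi^{12\overline{2}\overline{3}}$ together with Hodge $*$ duality for the two non-inclusions, and $\psi=2\phi^{2\overline{1}4\overline{4}}$ with primitive components $\alpha=-i\phi^{2\overline{1}}$, $\beta=\phi^{2\overline{1}4\overline{4}}-\phi^{2\overline{1}3\overline{3}}$ (plus conjugation for the $A$ and $\delbar$ cases) for the failure of the primitive harmonic decomposition. Your explicit appeal to the uniqueness of the primitive decomposition is a point the paper leaves implicit, but it is the same argument.
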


We remark that the almost K\"ahler structure of Example \ref{ex torus} is not left invariant with respect to the usual Lie group structure of the torus. In fact, we do not have any example of an $8$-dimensional manifold with a left invariant almost K\"ahler structure which satisfies the conditions of Corollary \ref{cor counterexample}. Below we present one such example of a manifold (described in \cite{cattaneo-tardini-tomassini}, Example 4.3), with a left invariant almost K\"ahler structure. We show that in this example all left invariant harmonic forms $\psi \in \mathcal{H}^{2,2}_{D}$ have a primitive decomposition such that each component is also contained in $\mathcal{H}^{2,2}_{D}$, where $D \in \{BC, A, \partial, \overline \partial\}$.   

\begin{example}
We start by defining 
$$\mathbb{H}(1,2) := \left\{\begin{pmatrix}
1 & 0 & x_1 & z_1 \\
0 & 1 & x_2 & z_2 \\
0 & 0 & 1   & y   \\
0 & 0 & 0   & 0
\end{pmatrix}  \, \middle| \, x_1, x_2, y, z_1, z_2 \in \mathbb{R}\right\}. $$
Then, if we let $\Gamma \subset \mathbb{H}(2,1)$ be the subgroup of elements with integer valued entries, we can define the compact 8-manifold $X := \Gamma \backslash \mathbb{H}(2,1) \times \mathbb{T}^3$. A left invariant coframe on $X$ can be given by
$$e^1 = dx_2, \quad \quad e^2 = dx_1, \quad \quad e^3 = dy, \quad \quad e^4 = du, $$
$$e^5 = dz_1 - x_1 dy, \quad \quad e^6 = dz_2 - x_2 dy, \quad \quad e^7 = dv, \quad \quad e^8 = dw, $$
where $u, v, w$ parametrise $\mathbb{T}^3$. 
From this coframe, we derive the structure equations
$$de^1 = de^2 = de^3= de^4 = de^7 = de^8 = 0 $$
$$de^5 = -e^{23}\quad \quad \quad \quad  de^6 = - e^{12}. $$
An almost Hermitian structure is then defined so that
$$\phi^1 = e^1 + i e^5 \quad \quad \quad \quad \phi^2 = e^2 + i e^6$$
$$\phi^3 = e^3 + i e^7 \quad \quad \quad \quad \phi^4 = e^4 + i e^8$$
are orthonormal $(1,0)$-forms, with structure equations given by
$$d\phi^{1} = -\frac i4 \left(\phi^{23}+ \phi^{2\overline 3} - \phi^{3\overline 2} + \phi^{\overline 2 \overline 3}\right), $$
$$d\phi^2 = -\frac i4 \left( \phi^{13} + \phi^{1 \overline 3} - \phi^{3\overline 1} + \phi^{\overline 1 \overline 3}\right), $$
$$d\phi^3 = d\phi^4 = 0. $$
The fundamental form corresponding to this almost Hermitian structure is
$$\omega = i\left(\phi^{1\overline 1} + \phi^{2\overline 2} + \phi^{3\overline 3} + \phi^{4\overline 4}\right). $$
This fundamental form is $d$-closed and so the structure we have defined is almost K\"ahler.

It is then a trivial (although computationally tedious) task to compute the space of left invariant forms contained in  $\mathcal{H}_{\overline\partial}^{2,2}$. This space is spanned by the following sixteen $(2,2)$-forms
\begin{align*}
    &\phi^{1 2 \overline 1 \overline 2}, &&\phi^{1 2 \overline 1 \overline 3}+ \phi^{1 3 \overline 1 \overline 2}, &&&\phi^{1 2 \overline 3 \overline 4}+ \phi^{1 3 \overline 2 \overline 4}, &&&&\phi^{1 2 \overline 2 \overline 3}+ \phi^{2 3 \overline 1 \overline 2}, \\ &\phi^{1 3 \overline 2 \overline 3}+ \phi^{2 3 \overline 1 \overline 3}, && \phi^{1 3 \overline 2 \overline 4}+\phi^{2 3 \overline 1 \overline 4}, &&&\phi^{1 3 \overline 1 \overline 3}+\phi^{2 3 \overline 2 \overline 3}, &&&&\phi^{1 3 \overline 1 \overline 4}+\phi^{2 3 \overline 2 \overline 4},\\ &\phi^{1 4 \overline 2 \overline 3}+\phi^{2 4 \overline 1 \overline 3}, &&\phi^{1 4 \overline 2 \overline 4}+\phi^{2 4 \overline 1 \overline 4}, &&&\phi^{1 4 \overline 1 \overline 3}+ \phi^{2 4 \overline 2 \overline 3}, &&&&\phi^{1 4 \overline 1 \overline 4}+ \phi^{2 4 \overline 2 \overline 4},\\ &\phi^{2 4 \overline 1 \overline 3} +\phi^{3 4 \overline 1 \overline 2}, &&\phi^{1 4 \overline 3 \overline 4}+ \phi^{3 4 \overline 1 \overline 4}, &&&\phi^{2 4 \overline 3 \overline 4}+ \phi^{3 4 \overline 2 \overline 4}, &&&&\phi^{3 4 \overline 3 \overline 4}.   
\end{align*}
Furthermore, when we compute the spaces of left invariant forms contained in $\mathcal{H}^{2,2}_{\partial}, \mathcal{H}^{2,2}_{BC}$ and $\mathcal{H}^{2,2}_{A}$, we find that these spaces are all equal to the space of left invariant forms in $\mathcal{H}^{2,2}_{\overline \partial}$. We claim that this implies that the primitive decomposition descends to harmonic $(2,2)$-forms in all of these spaces, i.e., if $\psi \in \mathcal{H}^{2,2}_{D}$ is left invariant then
$$\psi \in  \C\,\omega^2\oplus L\left(P^{1,1}\cap\H^{1,1}_{D}\right)\oplus \left(P^{2,2}\cap \H^{2,2}_{D}\right),  $$
for any $D \in \{\partial, \overline\partial, BC, A\}$.

To see why this is the case, consider the $(2,2)$-form $\psi \in A^{2,2}(X)$ with primitive decomposition
$\psi = c\omega^2 + \alpha \wedge \omega + \beta,$
where $c \in \mathbb{C}, \alpha \in P^{1,1}, \beta \in P^{2,2}$. Let $\psi$ be contained in both $\mathcal{H}^{2,2}_{\partial}$ and $\mathcal{H}^{2,2}_{\overline\partial}$. From $\psi \in \mathcal{H}^{2,2}_{\overline \partial}$ we obtain
\begin{align*}
&\overline\partial \alpha \wedge \omega + \overline\partial \beta = 0,\\
&\partial \alpha \wedge \omega -\partial \beta = 0,
\end{align*}
and from $\psi \in \mathcal{H}^{2,2}_{\partial}$ we obtain
\begin{align*}
&\overline\partial \alpha \wedge \omega - \overline\partial \beta = 0,\\
&\partial \alpha \wedge \omega +\partial \beta = 0.
\end{align*}
Combining these results, we see that 
$$\partial \alpha \wedge \omega = \overline \partial \alpha \wedge \omega = \partial \beta = \overline \partial \beta = 0.   $$
This is sufficient to imply that $\omega\wedge \alpha, \beta \in \mathcal{H}^{2,2}_{D}$ or in other words
$$\psi \in  \C\,\omega^2\oplus L\left(P^{1,1}\cap\H^{1,1}_{D}\right)\oplus \left(P^{2,2}\cap \H^{2,2}_{D}\right) $$
for all $D \in \{\partial, \overline \partial, BC, A\}$.  

\end{example}

\end{document}